\documentclass[12pt,english]{amsart}
\usepackage{amsfonts,amsmath,amsthm,amscd,amssymb,latexsym,amsbsy,pb-diagram,caption,url}
\usepackage[space, noadjust]{cite}
\usepackage[mathscr]{eucal}
\usepackage[T2A]{fontenc}
\usepackage[cp1251]{inputenc}
\usepackage[english]{babel}
\usepackage{tikz} 
\usetikzlibrary{positioning}
\usepackage{geometry}
\geometry{left=3cm}
\geometry{right=3cm}

\vfuzz2pt 

\newcounter{saveeqnu}

\newtheorem{theorem}{Theorem}[section]
\newtheorem{corollary}[theorem]{Corollary}
\newtheorem{lemma}[theorem]{Lemma}
\newtheorem{proposition}[theorem]{Proposition}

\theoremstyle{definition}
\newtheorem{definition}[theorem]{Definition}
\theoremstyle{remark}
\newtheorem{remark}[theorem]{Remark}
\newtheorem{example}[theorem]{Example}
\numberwithin{equation}{section}
\DeclareMathOperator{\diam}{diam}
\DeclareMathOperator{\card}{card}

\newcommand{\Sp}[1]{\operatorname{Sp}(#1)}
\newcommand{\ve}{\varepsilon}
\def\we{\mathrel{\stackrel{\rm w}=}}

\newcommand{\NN}{\mathbb{N}}
\newcommand{\RR}{\mathbb{R}}

\sloppy 

\makeatletter
\@namedef{subjclassname@2020}{%
  \textup{2020} Mathematics Subject Classification}
\makeatother

\begin{document}

\title
[On quasisymmetric mappings in semimetric spaces]
{On quasisymmetric mappings in semimetric spaces}

\author{Evgeniy Petrov}

\address{Institute of Applied Mathematics and Mechanics of the NAS of Ukraine,
Dobrovolskogo str. 1, 84100 Slovyansk, Ukraine}

\email{eugeniy.petrov@gmail.com}

\author{Ruslan Salimov}

\address{Institute of Mathematics of the NAS of Ukraine,
Tereschenkivska str. 3,  01024 Kiev, Ukraine}

\email{ruslan.salimov1@gmail.com}

\subjclass[2020]{54E25, 54C25}

\keywords{quasisymmetric mapping, semimetric space, triangle function, weak similarity}

\begin{abstract}
The class of quasisymmetric mappings on the real axis was first introduced by A.~Beurling and L.~V.~Ahlfors in 1956. In 1980 P.~Tukia and J.~V\"{a}is\"{a}l\"{a} considered these mappings between general metric spaces. In our paper we generalize the concept of quasisymmetric mappings to the case of general semimetric spaces and study some properties of these mappings. In particular, conditions under which quasisymmetric mappings preserve triangle functions, Ptolemy's inequality and the relation ``to lie between'' are found.  Considering quasisymmetric mappings between semimetric spaces with different triangle functions we have found a new estimation for the ratio of diameters of two subsets, which are images of two bounded subsets. This result generalizes the well-known Tukia-V\"{a}is\"{a}l\"{a} inequality. Moreover, we study connections between quasisymmetric mappings and weak similarities which are a special class of mappings between semimetric spaces.
\end{abstract}

\maketitle

\section{Introduction}

The fundamental concept of metric space was introduced by M. Fr\'{e}chet~\cite{Fr06} in 1906. Fr\'{e}chet called the discovered spaces ``classes (D)'' (from the word ``distance''). F. Hausdorff~\cite{Ha14} introduced the term “metric space” in 1914 considering these spaces as a special case of infinite topological spaces.

Let $X$ be a nonempty set. Recall that a mapping  $d\colon X\times X\to \mathbb{R}^+$, $\mathbb{R}^+=[0,\infty)$ is a \emph{metric} if for all $x,y,z \in X$ the following axioms hold:
\begin{itemize}
  \item [(i)] $(d(x,y)=0)\Leftrightarrow (x=y)$,
  \item [(ii)] $d(x,y)=d(y,x)$,
  \item [(iii)] $d(x,y)\leqslant d(x,z)+d(z,y)$.
\end{itemize}
The pair $(X,d)$ is called a \emph{metric space}. If only axioms (i) and (ii) hold then $d$ is called a \emph{semimetric}. A pair $(X,d)$, where  $d$  is a semimetric on $X$, is called a \emph{semimetric space}.
Such spaces were first examined by Fr\'{e}chet in~\cite{Fr06}, where he called them ``classes (E)''. Later these spaces attracted the attention of many mathematicians ~\cite{Ch17,Ni27,Wi31,Fr37}.

Note that in the literature a different terminology is used.
Sometimes a semimetric space is called a distance space~\cite{G16}; a semimetric also can be called a dissimilarity~\cite{DF98}. In~\cite[Chapter 10]{KV88}, the topological space $(X, \tau_d)$ with the topology generated by $d$ is called a symmetric space, whereas a semimetric space means a symmetric space in which all open balls are neighbourhoods.
In our paper we inherit the terminology from Wilson's pioneering paper~\cite{Wi31}, as well as it is adopted in a well-known Blumenthal's work~\cite[p.~7]{Bl53} and many recent papers, e.g.,~\cite{DP13, BP17, DH17, KS15}.

In 2017 M. Bessenyei and Z. P\'ales~\cite{BP17} introduced a definition of a triangle function $\Phi \colon \overline{\RR}_+^2\to \overline{\RR}^+$ for a semimetric $d$.
We use this definition in a slightly different form restricting the domain and the range of $\Phi$ by ${\RR}_+^2$ and ${\RR}^+$, respectively.

\begin{definition}\label{d2}
Consider a semimetric space $(X, d)$. We say that $\Phi \colon {\RR}^+\times{\RR}^+ \to {\RR}^+$ is a \emph{triangle function} for $d$ if $\Phi$ is symmetric and monotone increasing in both of its arguments, satisfies $\Phi(0,0)=0$ and, for all $x, y, z \in X$, the generalized triangle inequality
$$
d(x,y)\leqslant \Phi(d(x,z), d(y,z))
$$
holds.
\end{definition}
Obviously, metric spaces are semimetric spaces with the triangle function $\Phi(u, v) = u + v$.
In~\cite{BP17} those semimetric spaces whose so-called basic triangle functions are continuous at the origin were considered. These spaces were termed regular. It was shown that the topology of a regular semimetric space is Hausdorff, that a convergent sequence in a regular semimetric space has a unique limit and possesses the Cauchy property, etc. See also~\cite{JT20,VH17,CJT18} for some new results in this direction.

Quasisymmetric mappings play an important role in the theory of quasiconformal mappings. These mappings on the real line were first introduced by A. Beurling and L. V. Ahlfors~\cite{BA56}, who extended a quasisymmetric self-mapping of the real line to a quasiconformal self-mapping of the upper half-plane.
The necessary and sufficient condition on the function obtained in this case formed the basis for the definition of quasisymmetric functions introduced by J.A. Kelingos in 1966 and studied by him in~\cite{K66}. In the plane case, the first developments in the theory of quasisymmetric mappings belong to H. Renggli~\cite{Re71}, who considered mappings satisfying the boundedness condition for the distortion of a triangle.
In 1980 the Finnish mathematicians P. Tukia and J. V\"{a}is\"{a}l\"{a}~\cite{TV80} noticed that the definition given by H. Renggli can be extended to the case of general metric spaces, which allowed them to separate the class of $\eta$-quasisymmetric mappings.

Since the appearance of the seminal paper~\cite{TV80} these mappings  are intensively studied by many mathematicians all over the world (see, e.g.~\cite{BHW16, Va88, LVZ18, BK00, Ib10, Va81, LH16, As09, AB19}). It is also worth mentioning the following remarkable results. In 1998, J. Heinonen and P. Koskela~\cite{HK98} showed that the concepts of quasiconformality and quasisymmetry are quantitatively equivalent in a large class of metric spaces including Euclidean spaces. Later J. V\"{a}is\"{a}l\"{a} proved the quantitative equivalence between free quasiconformality and quasisymmetry of homeomorphisms between Banach spaces, see~\cite[Theorem 7.15]{Va99}.

In our paper we extend the definition from~\cite{TV80} to the case of general semimetric spaces.
\begin{definition}\label{d12}
Let $(X,d)$, $(Y,\rho)$ be semimetric spaces. We shall say that a mapping $f\colon X\to Y$ is $\eta$-\emph{quasisymmetric} if there is a homeomorphism $\eta\colon [0, \infty)\to [0,\infty)$ so that
\begin{equation}\label{e0}
d(x,a)\leqslant t d(x,b)  \, \text{ implies } \, \rho(f(x),f(a))\leqslant \eta(t)\rho(f(x),f(b))
\end{equation}
for all triples $a,b,x$ of points in $X$ and for all $t>0$.
\end{definition}

Note that in the definition from~\cite{TV80} the mapping $f\colon X \to Y$ was supposed to be an embedding, where $X$ and $Y$ are metric spaces.
The paper is organized as follows. In Section~\ref{genprop2} we discuss some topological properties of semimetric spaces and some properties of $\eta$-quasisymmetric mappings in general semimetric spaces. In Theorem~\ref{p24} of Section~\ref{s2} we found conditions under which the image $f(X)$ of a semimetric space $X$ with the triangle function $\Phi_1$ under $\eta$-quasisymmetric $f$ is a semimetric space with another triangle function $\Phi_2$. As corollaries of Theorem~\ref{p24} we obtain conditions under which $f$ preserves b-metricity and ultrametricity.
Conditions under which $f$ preserves the Ptolemy's inequality and the metric betweenness are found.
In Section~\ref{s4} considering quasisymmetric mappings between semimetric spaces with different triangle functions we have found a new estimation for the ratio of diameters of two subsets which are images of two bounded subsets. This result generalizes the well-known Tukia-V\"{a}is\"{a}l\"{a} inequality, see Theorem 2.5 in~\cite{TV80}.  In Section~\ref{s5} we study connections between $\eta$-quasisymmetric mappings and weak similarities which are a special class of mappings between semimetric spaces, see~\cite{DP13}. To be exact, conditions under which $\eta$-quasisymmetric mappings are weak similarities and conditions under which weak similarities are $\eta$-quasisymmetric mappings are found.

\section{Some properties of semimetric spaces and quasisymmetric mappings}\label{genprop2}

Note that in semimetric spaces the standard notions from the theory of metric spaces like convergence, open (closed) ball, etc. can be introduced in the usual way.

\begin{example}\label{ex21}
Let us show that in semimetric spaces a sequence can have more than one limit. Consider the subset $X=\{-1\}\cup\{0\}\cup \{\frac{1}{n}\,|\, n\in \NN\}$ of the real line. Define a semimetric space $(X,d)$ as follows: consider that all distances between points of the set $X$ are Euclidian except the distances between $-1$ and $\frac{1}{n}$ which we redefine to be $d(-1,\frac{1}{n})=\frac{1}{n}$. Hence, the sequence $ (\frac{1}{n})_{n\in \NN}$ has two limits $-1$ and $0$.
\end{example}

In the general case a semimetric $d\colon X\times X\to \RR^+$ generates a natural topology on $X$. Let $(X,d)$ be a semimetric space. Consider a topology $\tau_d$ on $X$ defined in the following way: a set $A\subseteq X$ is open in $(X,\tau_d)$ if and only if for every $a\in A$ there is $r>0$ such that $B(a,r)\subseteq A$, where $B(a,r)=\{x\in X\colon d(a,x)<r\}$.

\begin{remark}\label{r22}
If a semimetric space $(X,d)$ is defined as in Example~\ref{ex21}, then a simple verification shows that every open ball $B(x,r)\subseteq X$ is open in $(X,\tau_d)$, but the closed ball
$$
\overline{B}\left(0,\frac{1}{2}\right)=
\left\{ x\in X\colon d(0,x)\leqslant \frac{1}{2} \right\}
$$
is not a closed subset of $(X,\tau_d)$.
\end{remark}
More generally we have the following.
\begin{proposition}\label{p23}
Let $(X,d)$ be a semimetric space. If every closed ball of $(X,d)$ is a closed subset of the topological space $(X,\tau_d)$, then, for all points $x,y \in X$ and each sequence $(x_n)_{n\in \NN}$, the double equality
\begin{equation}\label{e(1)}
  \lim\limits_{n\to \infty}d(x_n,x)=0=
  \lim\limits_{n\to \infty}d(x_n,y)
\end{equation}
implies that $x=y$.
\end{proposition}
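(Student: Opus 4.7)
The plan is to argue by contradiction. Suppose that the hypothesis on closed balls holds and that some sequence $(x_n)_{n\in\NN}$ satisfies \eqref{e(1)} with $x\neq y$; I will produce a contradiction from the fact that $(x_n)$ would be forced to lie eventually both inside a certain closed ball and eventually outside it.

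Since $x\neq y$ and $d$ satisfies axiom (i), set $\delta:=d(x,y)>0$ and fix any $\varepsilon$ with $0<\varepsilon<\delta$. Consider the closed ball $\overline{B}(x,\varepsilon)=\{z\in X\colon d(x,z)\leqslant\varepsilon\}$. By symmetry of $d$ we have $d(x,y)=\delta>\varepsilon$, so $y\notin\overline{B}(x,\varepsilon)$. By the assumption of the proposition, $\overline{B}(x,\varepsilon)$ is closed in $(X,\tau_d)$, hence its complement $V:=X\setminus\overline{B}(x,\varepsilon)$ is open in $(X,\tau_d)$ and contains $y$. By the very definition of $\tau_d$, there exists $r>0$ with $B(y,r)\subseteq V$.

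Now I use both limits in \eqref{e(1)}. From $\lim_{n\to\infty}d(x_n,y)=0$ there is an index $N_1$ so that $d(x_n,y)<r$, hence $x_n\in B(y,r)\subseteq V$, for all $n\geqslant N_1$; for such $n$ one has $d(x,x_n)>\varepsilon$. On the other hand, from $\lim_{n\to\infty}d(x_n,x)=0$ there is an index $N_2$ so that $d(x,x_n)<\varepsilon$ for all $n\geqslant N_2$. Taking any $n\geqslant\max\{N_1,N_2\}$ yields a contradiction, which proves $x=y$.

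The only genuine point requiring care is the step that converts ``$\overline{B}(x,\varepsilon)$ is $\tau_d$-closed'' into a usable neighbourhood of $y$; this is where Remark~\ref{r22} makes the difference between the general semimetric setting and the classical metric one, and it is the reason the hypothesis is needed. Everything else is the standard $\varepsilon$-$N$ manipulation, so I do not anticipate any technical obstacle beyond choosing $\varepsilon<d(x,y)$ at the outset.
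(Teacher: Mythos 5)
Your proposal is correct and follows essentially the same route as the paper's own proof: both use the hypothesis to turn the complement of a closed ball $\overline{B}(x,\varepsilon)$ (the paper fixes $\varepsilon=d(x,y)/2$) into a $\tau_d$-open neighbourhood of $y$ containing some open ball $B(y,r)$, and then derive a contradiction from the fact that the sequence must eventually lie in both $B(x,\varepsilon)$ and $B(y,r)$. No gaps; the argument is sound.
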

\begin{proof}
Let $x$ and $y$  be different points of $X$ satisfying~(\ref{e(1)}) for a sequence $(x_n)_{n\in \NN}\subseteq X$ and let $r:=d(x,y)$.  We evidently have $r>0$  and
$$
y\notin
\overline{B}\left(x,\frac{r}{2}\right)=
\left\{z\in X\colon d(x,z) \leqslant \frac{r}{2} \right\}.
$$
It suffices to show that the closed ball $\overline{B}(x,\frac{r}{2})$ cannot be a closed subset of the topological space $(X,\tau_d)$. Towards this, suppose that $\overline{B}(x,\frac{r}{2})$ is closed in $(X,\tau_d)$. Then $X\setminus \overline{B}(x,\frac{r}{2})$ is open in  $(X,\tau_d)$ and $y \in X\setminus \overline{B}(x,\frac{r}{2})$. Hence, by definition of $\tau_d$, there is $r^* \in (0,\infty)$ such that the open ball $B(y,r^*)$  is a subset $X\setminus \overline{B}(x,\frac{r}{2})$. In particular, the inclusion $B(y,r^*)\subseteq X\setminus \overline{B}(x,\frac{r}{2})$ implies
$$
B\left(x,\frac{r}{2}\right)\cap B(y,r^*)=\varnothing,
$$
contrary to~(\ref{e(1)}).
\end{proof}

It was shown in Theorem~5.6 of~\cite{CJT18} that for every unbounded connected metric space $(X,\rho)$, there exists a semimetric $d$ on $X$ such that $d$ and $\rho$ are Lipschitz equivalent, but, for all $x\in X$ and $r>0$, we have $B(x,r)\notin\tau_d$ and $X\setminus \overline{B}(x,r) \notin \tau_d$. Consequently, the converse to Proposition~\ref{p23} is false.

Let $A$ be a subset of a topological space $X$. Recall that \emph{interior} of $A$ ($\operatorname{Int}(A)$) is the union of all open subsets of $X$ contained in $A$. The following lemma is a reformulation of a known characterization of the first-countable topologies $\tau_d$ generated by semimetrics $d$ (see, for example, p.~277 in~\cite{HNV04}).

\begin{lemma}\label{l24}
Let $(X,d)$ be a semimetric space. Then $\tau_d$ is first-countable if and only if $x\in \operatorname{Int}(B(x,r))$ holds for every $x\in X$ and every $r\in (0,\infty)$.
\end{lemma}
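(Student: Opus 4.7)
The plan is to handle the two directions separately; $(\Leftarrow)$ is the easier one. Assuming $x \in \operatorname{Int}(B(x,r))$ for every $x \in X$ and every $r > 0$, I would verify that the countable family $\{\operatorname{Int}(B(x,1/n)) : n \in \NN\}$ is a neighbourhood base at each $x$. Each member is open (being an interior) and contains $x$ by hypothesis, so is a neighbourhood of $x$; and if $V \in \tau_d$ is any open set containing $x$, the definition of $\tau_d$ yields $r > 0$ with $B(x,r) \subseteq V$, and taking $n$ with $1/n < r$ gives $\operatorname{Int}(B(x,1/n)) \subseteq B(x,1/n) \subseteq V$. Thus $\tau_d$ is first-countable.

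For the converse $(\Rightarrow)$, I would argue by contradiction. Suppose $\tau_d$ is first-countable but $x \notin \operatorname{Int}(B(x,r_0))$ for some $x \in X$ and $r_0 > 0$; equivalently, $x \in \overline{X \setminus B(x,r_0)}$. Since closure coincides with sequential closure in a first-countable space, there exists a sequence $(y_n) \subseteq X \setminus B(x,r_0)$ with $y_n \to x$ in $\tau_d$, so $d(x,y_n) \geq r_0$ for every $n$ while every open neighbourhood of $x$ contains a tail of $(y_n)$. I would then fix a countable nested decreasing base $\{W_k\}_{k\in\NN}$ of open neighbourhoods at $x$; since $B(x,r_0)$ is not a neighbourhood of $x$, no $W_k$ is contained in $B(x,r_0)$ (otherwise $W_k$ itself would witness $B(x,r_0)$ as a neighbourhood), and one may pick $z_k \in W_k \setminus B(x,r_0)$, so $d(x,z_k) \geq r_0$ for every $k$.

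The main obstacle, and the heart of the argument, is to extract the actual contradiction from this setup, since topological convergence in $\tau_d$ does not automatically force $d$-convergence. The classical approach behind the characterization cited in \cite{HNV04} is a diagonal construction producing an open neighbourhood $U$ of $x$ that fails to be contained in any $W_k$, contradicting that $\{W_k\}$ is a base. The set $U$ is assembled by choosing, at each point witnessing openness of $W_k$, a ball of suitably small radius in a diagonal pattern so that $U$ contains $x$, is open, and omits at least one $z_k$ from each $W_k$. The delicate step is precisely this diagonal choice of radii, since the absence of the triangle inequality prevents the direct expedient $U = B(x,r_0/2)$; this is where I expect the proof to require the most care.
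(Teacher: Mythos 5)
First, note that the paper does not actually prove Lemma~\ref{l24}: it is stated as a reformulation of a known characterization and attributed to~\cite{HNV04}, so there is no in-paper argument to compare yours with. Your $(\Leftarrow)$ direction is correct and complete: under the hypothesis, $\{\operatorname{Int}(B(x,1/n))\colon n\in\NN\}$ is indeed a countable neighbourhood base at each $x$. The problem is the $(\Rightarrow)$ direction, where you set up the natural configuration (points $z_k\in W_k$ with $d(x,z_k)\geqslant r_0$) and then defer the entire remaining argument to an unspecified ``diagonal construction'' of radii. That step is where all the content lives, and leaving it as the place ``where I expect the proof to require the most care'' means this direction is not proved.

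Moreover, the step you postponed cannot be carried out at this level of generality: the forward implication fails for the topology $\tau_d$ as defined in Section~\ref{genprop2}. Take $X=\{x_0\}\cup\{a_n\colon n\in\NN\}\cup\{b_m\colon m\in\NN\}$ with $d(x_0,a_n)=1/n$, $d(x_0,b_m)=1$, $d(a_n,b_m)=1/m$, and all remaining distances between distinct points equal to $1$. Then each $b_m$ is isolated, the sets $\{a_n\}\cup\{b_m\colon m\geqslant M\}$ form a countable neighbourhood base at $a_n$, and the sets $\{x_0\}\cup\{a_n\colon n\geqslant N\}\cup\{b_m\colon m\geqslant M\}$ form a countable neighbourhood base at $x_0$, so $(X,\tau_d)$ is first-countable. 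Yet every open set containing $x_0$ contains cofinitely many $a_n$ and hence, by openness at those $a_n$, cofinitely many $b_m$, while $B(x_0,1)$ contains no $b_m$; therefore $\operatorname{Int}(B(x_0,1))=\varnothing$ and $x_0\notin\operatorname{Int}(B(x_0,1))$. The classical theorem behind the citation (a symmetrizable space is semi-metrizable if and only if it is first-countable) is existential in the distance function: first countability of $\tau_d$ yields \emph{some} compatible symmetric whose balls are neighbourhoods, not that the balls of the given $d$ are. So your instinct that the hard direction hides the difficulty was sound, but no choice of radii will rescue it; only the $(\Leftarrow)$ implication can be established as stated.
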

\begin{proposition}\label{p25}
Let $(X,d)$ be a semimetric space. Suppose that $(X,\tau_d)$ is first countable. Then the following statements are equivalent:
\begin{itemize}
  \item [(i)] For all points $x,y\in X$ and each sequence $(x_n)_{n\in \NN}\subseteq X$, double equality~(\ref{e(1)}) implies that $x=y$.
  \item [(ii)] The topological space $(X,\tau_d)$ is Hausdorff.
\end{itemize}
\end{proposition}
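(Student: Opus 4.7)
The plan is to prove the two directions separately, with first-countability being crucial only for the implication (i) $\Rightarrow$ (ii).

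For (ii) $\Rightarrow$ (i), which actually does not require first-countability, I would argue by contradiction: assume $x \neq y$ and that some sequence $(x_n)$ satisfies~(\ref{e(1)}). Since $(X,\tau_d)$ is Hausdorff, choose disjoint open sets $U \ni x$ and $V \ni y$. By the definition of $\tau_d$, there exist $r,r'>0$ with $B(x,r)\subseteq U$ and $B(y,r')\subseteq V$. Now $d(x_n,x)\to 0$ forces $x_n\in B(x,r)\subseteq U$ for all large $n$, and analogously $x_n\in V$ eventually, contradicting $U\cap V=\varnothing$. This is essentially the same mechanism used in the proof of Proposition~\ref{p23}.

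For (i) $\Rightarrow$ (ii), I would argue the contrapositive: assume $(X,\tau_d)$ is not Hausdorff and construct a sequence violating~(i). Pick distinct points $x,y\in X$ that cannot be separated by disjoint open sets. The key step is to exhibit, using first-countability, a countable decreasing neighborhood basis at $x$ and at $y$ consisting of open sets contained in the balls $B(x,1/n)$ and $B(y,1/n)$ respectively. By Lemma~\ref{l24}, first-countability gives $x\in \operatorname{Int}(B(x,1/n))$ and $y\in \operatorname{Int}(B(y,1/n))$ for every $n$. Moreover, from the definition of $\tau_d$ it is straightforward that every open set $W\ni x$ contains some $B(x,r)$ and hence some $\operatorname{Int}(B(x,1/n))$, so these open sets do form a neighborhood base at $x$; similarly at $y$.

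Having fixed those bases, the failure of the Hausdorff property implies that for each $n$ the open sets $\operatorname{Int}(B(x,1/n))$ and $\operatorname{Int}(B(y,1/n))$ have non-empty intersection; selecting any $x_n$ from this intersection yields $d(x_n,x)<1/n$ and $d(x_n,y)<1/n$, so $(x_n)$ realizes~(\ref{e(1)}) while $x\neq y$, contradicting (i). The main technical point, and essentially the only place where first-countability is genuinely used, is converting the abstract assumption on $\tau_d$ into an honest decreasing sequence of open neighborhoods sitting inside the balls $B(x,1/n)$; once this is in place, the extraction of the witnessing sequence is immediate from the negation of Hausdorffness.
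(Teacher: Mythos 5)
Your proposal is correct and follows essentially the same route as the paper: the (ii)$\Rightarrow$(i) direction is identical (shrink the separating open sets to balls and note the sequence must eventually enter both), and your contrapositive argument for (i)$\Rightarrow$(ii) is just a logical rearrangement of the paper's direct proof, relying on the same Lemma~\ref{l24} to place $x$ and $y$ in $\operatorname{Int}(B(x,r_n))$ and $\operatorname{Int}(B(y,r_n))$ and extracting the witnessing sequence from the non-empty intersections. The extra discussion of neighborhood bases is harmless but unnecessary, since all the argument needs is that these interiors are open sets containing $x$ and $y$.
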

\begin{proof}
(i)$\Rightarrow$(ii). Let (i) hold. Let us consider two different points $x$ and $y$ in $(X,\tau_d)$. We must show that there exist some sets $U_x$, $U_y \in \tau_d$ such that
\begin{equation}\label{e(2)}
  x\in U_x, \quad y\in U_y, \quad U_x\cap U_y=\varnothing.
\end{equation}
Let $(r_n)_{n\in \NN}$ be a sequence in $(0,\infty)$ so that $\lim_{n\to\infty} r_n=0$. By Lemma~\ref{l24}, we obtain $x\in \operatorname{Int}(B(x,r_n))$ and $y\in \operatorname{Int}(B(y,r_n)) \in \tau_d$ for all $n\in \NN$. If there is $n_0\in \NN$ such that
$$
\operatorname{Int}(B(x,r_{n_0}))\cap \operatorname{Int}(B(y,r_{n_0}))=\varnothing,
$$
then~(\ref{e(2)}) holds with $U_x = \operatorname{Int}(B(x,r_{n_0}))$ and $U_y = \operatorname{Int}(B(y,r_{n_0}))$. Otherwise, for each $n\in \NN$ we can find $x_n\in X$ such that
\begin{equation}\label{e(3)}
x_n\in \operatorname{Int}(B(x,r_n))\cap \operatorname{Int}(B(y,r_n)).
\end{equation}
The equality $\lim_{n\to \infty} r_n=0$ and~(\ref{e(3)}) imply that
$$
  \lim\limits_{n\to \infty}d(x_n,x)=0=
  \lim\limits_{n\to \infty}d(x_n,y).
$$
Consequently, by Statement (i), $x=y$ holds, contrary to $x\neq y$.

(ii)$\Rightarrow$(i)
Let $(X,\tau_d)$ be a Hausdorff topological space. Let us consider $x,y\in X$ and $(x_n)_{n\in \NN}\subseteq X$ satisfying~(\ref{e(1)}). Since $(X,\tau_d)$ is Hausdorff, there are disjoint $U_x, U_y \in \tau_d$ such that $x\in U_x$ and $y\in U_y$. It follows from the definition of $\tau_d$ that $B(x,r) \subset U_x$ and $B(y,r)\subset U_y$ for some $r>0$. Since $U_x$ and $U_y$ are disjoint,
\begin{equation}\label{e(4)}
B(x,r)\cap B(y,r)=\varnothing
\end{equation}
holds. To complete the proof, it suffices to note that~(\ref{e(1)}) implies
$$
x_n\in B(x,r_n)\cap B(y,r_n)\subseteq B(x,r)\cap B(y,r)
$$
whenever $n\in \NN$ is large enough, contrary to~(\ref{e(4)}).
\end{proof}
\begin{remark}
There are many different examples of semimetric spaces, where a sequence of points can have more than one limit (see, for example,~\cite{Tu21} for a collection of related examples). Propositions~\ref{p23} and~\ref{p25}, which give conditions for the uniqueness of the limits, look new, although, the uniqueness conditions for limits of sequences in semimetric spaces were studied already in Wilson’s paper~\cite{Wi31}.
\end{remark}

The topology $\tau_d$ in the general case may not be suitable for use (an open ball $B(a, r)$ is not necessarily an open set). Therefore in the following proposition we consider a class of semimetric spaces $(X,d)$ in which the semimetric $d$ has a natural property of continuity: for any pair of points $a, b \in X$ and any $\ve>0$ there exists $\delta>0$ such that for all $x\in B(a,\delta)$ and $y\in B(b,\delta)$ the inequality $|d(x,y)-d(a,b)|<\ve$ holds. In this case the semimetric $d$ is called continuous. In particular, any metric $d$ has this property. The topology generated by a continuous semimetric is always a Hausdorff topology. The system of sets of the form $B(a,r)$ gives the neighborhood base at the point $a$ in this topology.

Recall that in general topology an \emph{embedding} is a homeomorphism onto its image.

The following proposition generalizes Theorem 2.21 from~\cite{TV80} to the case when $X$, and $Y$ are semimetric spaces. The proof is analogous to the proof from~\cite{TV80} but we reproduce it here with slight modifications for the convenience of the reader.

\begin{proposition}\label{p21}
Let $(X,d)$, $(Y,\rho)$ be semimetric spaces with continuous $d$ and $\rho$, $\eta\colon [0, \infty)\to [0,\infty)$ be a homeomorphism, and $f\colon X\to Y$ be a mapping such that implication~(\ref{e0}) holds for for all triples $a,b,x$ of points in $X$ and for all $t>0$. Then $f$ is either constant or an $\eta$-quasisymmetric embedding.
\end{proposition}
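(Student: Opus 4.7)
The plan is to reduce the proposition to showing that a non-constant $f$ is a homeomorphism onto its image $f(X)$, because combined with the standing hypothesis (\ref{e0}) this produces precisely an $\eta$-quasisymmetric embedding in the sense of Definition~\ref{d12}. The three verifications needed are: (a) $f$ is injective, (b) $f$ is continuous, (c) $f^{-1}\colon f(X)\to X$ is continuous. The hypothesis that $d$ and $\rho$ are continuous (as discussed in the paragraph preceding the proposition) guarantees that $\tau_d$ and $\tau_\rho$ are Hausdorff, so sequential arguments are legitimate and limits are unique. Moreover, $\eta$ being a homeomorphism of $[0,\infty)$ forces $\eta(0)=0$ and $\eta(t)\to\infty$ as $t\to\infty$, facts that will be used in opposite directions below.

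First I would settle the dichotomy between constant and injective. If $f(a)=f(x)$ for some $a\neq x$, fix an arbitrary $b\in X$, set $t=d(x,b)/d(x,a)>0$ (note $d(x,a)>0$), and apply (\ref{e0}) in the symmetric guise $d(x,b)\leqslant t\,d(x,a)\Rightarrow \rho(f(x),f(b))\leqslant \eta(t)\,\rho(f(x),f(a))$. The right-hand side vanishes, so $f(b)=f(x)$; since $b$ was arbitrary, $f$ is constant. From here on I may assume $f$ is non-constant and hence injective.

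For continuity of $f$ at a point $x$, take a sequence with $d(x_n,x)\to 0$, pick any $b\neq x$, and set $t_n=d(x_n,x)/d(x,b)\to 0$; then (\ref{e0}) yields $\rho(f(x_n),f(x))\leqslant \eta(t_n)\,\rho(f(x),f(b))\to 0$ using $\eta(0)=0$. For continuity of $f^{-1}$, suppose $\rho(f(x_n),f(x))\to 0$ and choose $b$ with $f(b)\neq f(x)$ (possible because $f$ is non-constant and injective). For those $n$ with $x_n\neq x$ set $t_n=d(x,b)/d(x,x_n)$, so that $d(x,b)\leqslant t_n\,d(x,x_n)$, and (\ref{e0}) gives
$$
\eta(t_n)\geqslant \frac{\rho(f(x),f(b))}{\rho(f(x),f(x_n))}\longrightarrow \infty,
$$
whence $t_n\to\infty$ and therefore $d(x,x_n)\to 0$.

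I expect the continuity of $f^{-1}$ to be the delicate step: the semimetric setting offers no triangle inequality, so the only bridge between $d$ and $\rho$ is the implication (\ref{e0}); converting a small $\rho$-distance back to a small $d$-distance forces one to reverse the roles of the arguments and calibrate $t_n$ against a reference point $b$ with $f(b)\neq f(x)$, which is precisely where the non-constancy of $f$ is consumed. Once (a)--(c) are in hand, $f$ is a homeomorphism onto $f(X)$, and together with (\ref{e0}) this is precisely an $\eta$-quasisymmetric embedding in the sense of Definition~\ref{d12}, concluding the proof.
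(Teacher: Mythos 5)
Your proof is correct and follows the same skeleton as the paper's (injectivity-or-constancy, continuity of $f$, continuity of $f^{-1}$), with the first two steps essentially identical to the paper's. The real divergence is in the continuity of $f^{-1}$, and there your argument is genuinely different and cleaner: the paper splits into the cases where $x_0$ is or is not isolated, and in the non-isolated case runs an $\varepsilon$--$\delta$ contradiction that manufactures a lower bound $\rho(f(x_0),f(x))\geqslant \rho(f(x_0),f(x'))/\eta(1)$ from an auxiliary point $x'$ with $d(x_0,x')<\varepsilon$; in the isolated case it separately shows $f(x_0)$ is isolated in $f(X)$. You instead fix one reference point $b$ with $f(b)\neq f(x)$, read the implication~(\ref{e0}) backwards to get $\eta(t_n)\geqslant \rho(f(x),f(b))/\rho(f(x),f(x_n))\to\infty$, and conclude $t_n\to\infty$, hence $d(x,x_n)=d(x,b)/t_n\to 0$; this handles both cases at once and exploits $\eta(t)\to\infty$ rather than the value $\eta(1)$. (Injectivity is needed to know $\rho(f(x),f(x_n))>0$ when $x_n\neq x$, which you have.) The one imprecision is your justification for working sequentially: what licenses replacing continuity by sequential continuity is not Hausdorffness but the fact that for a continuous semimetric the balls $B(a,r)$ form a neighbourhood base at $a$ (first countability), as the paper notes in the paragraph preceding the proposition; with that substitution your argument is complete.
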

\begin{proof}
Let $x_0\in X$ and $\ve>0$. Fix $b\in X$, $b \neq x_0$. Choose $t>0$ such that $\eta(t)\rho(f(b),f(x_0))<\ve$. Then $\rho(f(x),f(x_0))<\ve$ for $x\in B(x_0, td(b,x_0))$. Thus, $f$ is continuous.

Suppose that $f$ is not constant. If $f(x)=f(y)$ for some $x\neq y$ and if $z\neq x$, then setteing $t=d(z,x)/d(y,x)$, we obtain $\rho(f(z),f(x))\leqslant \eta(t)\rho(f(y),f(x))=0$. Hence, $f$ is injective.

It remains to prove that $f^{-1}\colon f(X) \to X$ is continuous at an arbitrary point $f(x_0)$. Suppose first that $x_0$ is not isolated in $X$. Let us show that for every $\ve$ there exists $\delta>0$ such that $f^{-1}(B(f(x_0),\delta))\subseteq B(x_0,\ve)$ or equivalently $B(f(x_0),\delta)\subseteq f(B(x_0,\ve))$.
Suppose it is not so. Hence, for some $\ve>0$ and for all $\delta>0$ there exists $x\notin B(x_0,\ve)$ such that $f(x)
\in B(f(x_0),\delta)$. Since $x_0$ is not isolated, there exists  $x'$  such that $d(x_0,x')<\ve$. Since $d(x_0,x)\geqslant \ve$, we have $d(x_0,x')<d(x_0,x)$ and $\rho(f(x_0), f(x'))\leqslant \eta(1)\rho(f(x_0), f(x))$. Hence, for $\delta=\rho(f(x_0), f(x'))/\eta(1)$ we have $f(x)\notin B(f(x_0), \delta)$ for all $x$ such that $d(x_0,x)\geqslant \ve$.

Suppose that $x_0$ is isolated in $X$. We must show that $f(x_0)$ is isolated in $fX$. If this is not true, there is a sequence of points $x_j\in X$, $x_j\neq x_0$, such that $f(x_j)\to f(x_0)$. Choose $t>0$ such that $d(x_1,x_0)\leqslant t d(x_j,x_0)$ for all $j\geqslant 1$. Then $\rho(f(x_1),f(x_0))\leqslant \eta(t)\rho(f(x_j),f(x_0))\to 0$, which gives a contradiction.
\end{proof}

Note that according to Definition~\ref{d12} the mapping $f$ is not supposed to be continuous. Only formal implication~(\ref{e0}) is essential. All the following results of this work are focused mainly on the study of distance properties of semimetric spaces determined by the existence of a quasisymmetric mapping between them.
Thus, in what follows we do not consider any topologies in semimetic spaces.

\begin{proposition}
Let $(X,d)$, $(Y,\rho)$ be semimetric spaces, $f\colon X\to Y$ be an $\eta$-quasisymmetric mapping, and let $\operatorname{card}(X)\geqslant 2$. Then
\begin{equation}\label{e22}
  \eta(t)\eta\left(\frac{1}{t}\right)\geqslant 1
\end{equation}
for all $t=d(x,a)/d(x,b)$, where $x, a, b \in X$, $x\neq a$, $x\neq b$.

Moreover, $\eta(1)\geqslant 1$.
\end{proposition}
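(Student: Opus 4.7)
The strategy is a direct double application of~(\ref{e0}). Fix $x,a,b\in X$ with $x\neq a$ and $x\neq b$, and set $t:=d(x,a)/d(x,b)$; since semimetric axiom (i) forces $d(x,a),d(x,b)>0$, we have $t\in(0,\infty)$. Applying~(\ref{e0}) to the triple $(a,b,x)$ with ratio $t$ gives
\[
\rho(f(x),f(a))\leqslant \eta(t)\,\rho(f(x),f(b)),
\]
and applying~(\ref{e0}) again to the same triple but with the roles of $a$ and $b$ swapped (so that the ratio becomes $1/t$) gives
\[
\rho(f(x),f(b))\leqslant \eta(1/t)\,\rho(f(x),f(a)).
\]
Chaining the two produces
\[
\rho(f(x),f(a))\leqslant \eta(t)\,\eta(1/t)\,\rho(f(x),f(a)),
\]
and dividing by $\rho(f(x),f(a))$ yields the desired inequality~(\ref{e22}).

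For the ``moreover'' claim, the hypothesis $\operatorname{card}(X)\geqslant 2$ lets me pick two distinct points $x\neq a$ in $X$ and then set $b:=a$; with this choice the triple still satisfies $x\neq a$ and $x\neq b$, while $t=d(x,a)/d(x,b)=1$. The already-established inequality specialises to $\eta(1)^2\geqslant 1$, hence $\eta(1)\geqslant 1$.

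The one delicate point is the cancellation of $\rho(f(x),f(a))$, which tacitly assumes this quantity is positive. If $\rho(f(x),f(a))=0$, the second chained inequality forces $\rho(f(x),f(b))=0$ as well, and the implication~(\ref{e0}) for that triple becomes vacuous, so it imposes no constraint on $\eta(t)\eta(1/t)$; one then either reads~(\ref{e22}) as asserted on the set of ratios $t$ realised by triples with at least one positive image distance, or invokes the non-constancy of $f$ on the relevant pair. I expect this to be the only mild obstacle; otherwise the proof is essentially a two-line computation.
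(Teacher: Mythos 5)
Your proof is correct and follows essentially the same route as the paper: two applications of~(\ref{e0}), once with ratio $t$ and once with the reciprocal ratio $1/t$, chained together and then cancelled, with the case $b=a$ giving $\eta(1)\geqslant 1$. The degenerate possibility $\rho(f(x),f(a))=0$ that you flag is glossed over in the paper's own proof as well, so your treatment is, if anything, slightly more careful.
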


\begin{proof}
Let $x, a, b \in X$, $x\neq a$, $x\neq b$, and let $t=d(x,a)/d(x,b)$. Hence, $1 / t=d(x,b)/d(x,a)$. By~(\ref{e0}) we have
$$
\rho(f(x),f(a))\leqslant \eta(t) \rho(f(x),f(b))
$$
and
$$
\rho(f(x),f(b))\leqslant \eta\left(\frac{1}{t}\right) \rho(f(x),f(a)).
$$
Hence,
\begin{equation}\label{e}
\frac{1}{\eta(\frac{1}{t})} \rho(f(x),f(b))\leqslant \rho(f(x),f(a))\leqslant \eta(t) \rho(f(x),f(b))
\end{equation}
which gives~(\ref{e22}).

Taking $a=b$ we easily get the inequality $\eta(1)\geqslant 1$.
\end{proof}

Recall that a mapping $f$ from a semimetric space $(X, d)$ to a semimetric space $(Y, \rho)$ is a \emph{similarity} if there exists $\lambda>0$ such that
$$
\rho(f(x),f(y))= \lambda d(x,y)
$$
for all $x$, $y \in X$.

\begin{proposition}\label{p248}
Let $(X,d)$, $(Y,\rho)$ be semimetric spaces and let $f\colon X\to Y$ be an $\eta$-quasisymmetric mapping with $\eta(t)=t^{\alpha}$, $\alpha>0$. Then
\begin{equation}\label{e25}
\rho(f(x),f(y))=\lambda (d(x,y))^{\alpha}
\end{equation}
for some $\lambda > 0$. In particular, if $\alpha=1$, then $f$ is a similarity.
\end{proposition}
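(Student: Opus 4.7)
The plan is to exploit the very tight form of $\eta$, namely $\eta(t)=t^{\alpha}$, by running the quasisymmetry inequality in both directions to force equality, and then use the symmetry of the semimetrics to pin down a global constant.

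First, fix $x\in X$ and any two points $a,b\in X\setminus\{x\}$. The power-law $\eta$ makes the choice $t=d(x,a)/d(x,b)$ admissible and turns implication~\eqref{e0} into
\[
\rho(f(x),f(a))\leqslant \Bigl(\frac{d(x,a)}{d(x,b)}\Bigr)^{\alpha}\rho(f(x),f(b)).
\]
Swapping the roles of $a$ and $b$ and using $\eta(1/t)=1/t^{\alpha}$ gives the reverse inequality, so in fact
\[
\frac{\rho(f(x),f(a))}{d(x,a)^{\alpha}}=\frac{\rho(f(x),f(b))}{d(x,b)^{\alpha}}.
\]
Thus for each $x$ with at least one other point in $X$, the quantity $\lambda(x):=\rho(f(x),f(y))/d(x,y)^{\alpha}$ does not depend on the choice of $y\neq x$.

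Second, I pin down $\lambda$ as a universal constant. For any two distinct points $x_{1},x_{2}\in X$, apply the previous identity to the pair $(x_{1},x_{2})$ from both sides:
\[
\lambda(x_{1})\,d(x_{1},x_{2})^{\alpha}=\rho(f(x_{1}),f(x_{2}))=\rho(f(x_{2}),f(x_{1}))=\lambda(x_{2})\,d(x_{2},x_{1})^{\alpha},
\]
and symmetry of $d$ forces $\lambda(x_{1})=\lambda(x_{2})$. Denoting the common value by $\lambda$, equality~\eqref{e25} follows for all pairs $x\neq y$; it is trivial when $x=y$.

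Finally, I address positivity of $\lambda$ and the similarity statement. Positivity reduces to ruling out the degenerate case $\lambda=0$: if $\lambda=0$ then $\rho(f(x),f(y))=0$ for all $x,y$, i.e. $f$ is constant, so in the non-degenerate situation the identity~\eqref{e25} forces $\lambda>0$. Setting $\alpha=1$ in~\eqref{e25} gives $\rho(f(x),f(y))=\lambda d(x,y)$, which is the definition of a similarity. The main (small) obstacle is handling the exceptional points where $d(x,y)=0$ or where $f$ collapses; once the ratio $\rho(f(x),f(y))/d(x,y)^{\alpha}$ is shown to be a well-defined constant on pairs of distinct points, the rest is formal.
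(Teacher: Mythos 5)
Your proof is correct and follows essentially the same route as the paper: the two-sided use of \eqref{e0} with $\eta(t)=t^{\alpha}$ forces equality of the ratios $\rho(f(x),f(a))/d(x,a)^{\alpha}$, after which the constant is globalized. The only (minor) differences are that you globalize $\lambda$ directly via the symmetry of $d$ and $\rho$ applied to a single pair, whereas the paper routes through a fourth point $c$, and that you explicitly flag the degenerate constant-map case (where $\lambda=0$), which the paper's proof passes over in silence.
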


\begin{proof}
Consider~(\ref{e}) with $\eta(t)=t^{\alpha}$. Hence,
$$
  \rho(f(x),f(a))= t^{\alpha} \rho(f(x),f(b)).
$$
Since $t=d(x,a)/d(x,b)$ we get
$$
\frac{\rho(f(x),f(a))}{(d(x,a))^{\alpha}}=\frac{\rho(f(x),f(b))}{(d(x,b))^{\alpha}}.
$$
Suppose that $a\neq b$ and there exists $c\neq x,a,b$.  Analogously, we can show that
$$
\frac{\rho(f(c),f(b))}{(d(c,b))^{\alpha}}=\frac{\rho(f(x),f(b))}{(d(x,b))^{\alpha}},
$$
i.e.,
$$
\frac{\rho(f(x),f(a))}{(d(x,a))^{\alpha}}=\frac{\rho(f(c),f(b))}{(d(c,b))^{\alpha}}
$$
for all different $x,a,b,c$, which implies~(\ref{e25}).
\end{proof}

\begin{proposition}\label{p34}
Let $(X,d)$, $(Y,\rho)$ be semimetric spaces and let $f\colon X \to Y$ be a mapping satisfying the double inequality
\begin{equation}\label{e30}
\varphi_1( d(x,y) )\leqslant \rho(f(x),f(y))\leqslant \varphi_2  (d(x,y)),
\end{equation}
where $\varphi_1, \varphi_2 \colon [0, \infty) \to [0, \infty)$ are such that  $\varphi_1(0)=\varphi_2(0)=0$, $\varphi_1(t)\leqslant\varphi_2(t)$ for all $t\in \RR^+$ and let $\varphi_1$ and $\varphi_2$ have the following properties:
\begin{itemize}
  \item [(i)] $\varphi_2(t) \text{ is nondecreasing,}$
  \item [(ii)] $\varphi_2(uv)\leqslant C \varphi_2(u)\varphi_2(v)\text{ for some } C>0,$
  \item [(iii)] $\varphi_2(t)\leqslant K\varphi_1(t) \text{ for some } K \geqslant 1$,
  \item [(iv)] $\varphi_1 \colon [0, \infty) \to [0, \infty)$ is a homeomorphism.
\end{itemize}
Then $f$ is an $\eta$-quasisymmetric mapping with $\eta(t)=CK^2\varphi_1(t)$.
\end{proposition}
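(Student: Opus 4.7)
The plan is to chase the chain of inequalities produced by the four hypotheses, starting from the upper bound in (\ref{e30}) and terminating at the lower bound in (\ref{e30}), with properties (i)--(iii) bridging them. Specifically, assuming $d(x,a)\leqslant t\,d(x,b)$ I would first apply the upper inequality in (\ref{e30}) to estimate $\rho(f(x),f(a))\leqslant \varphi_2(d(x,a))$, then use the monotonicity in (i) together with the hypothesis to pass to $\varphi_2(t\,d(x,b))$.

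Next I would use the submultiplicativity in (ii) to split this as $C\varphi_2(t)\varphi_2(d(x,b))$, and apply (iii) twice: once to dominate $\varphi_2(t)$ by $K\varphi_1(t)$, and once to dominate $\varphi_2(d(x,b))$ by $K\varphi_1(d(x,b))$. At the last step the lower inequality in (\ref{e30}) yields $\varphi_1(d(x,b))\leqslant \rho(f(x),f(b))$, and assembling all factors gives
\[
\rho(f(x),f(a))\leqslant CK^2\varphi_1(t)\,\rho(f(x),f(b)),
\]
so $\eta(t):=CK^2\varphi_1(t)$ is the desired distortion function.

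It remains to note that $\eta$ is a homeomorphism of $[0,\infty)$ onto itself. This is immediate from (iv): $\varphi_1$ is already a homeomorphism, and multiplying by the positive constant $CK^2$ preserves continuity, strict monotonicity, the value at $0$, and the behavior at infinity.

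There is no substantial obstacle here; the argument is essentially a bookkeeping exercise and each of the hypotheses (i)--(iv) is used exactly where one would expect. The only mildly delicate point is verifying that all four conditions can be chained in a single pass on the same variables, which requires applying (iii) both to the ``size'' variable $d(x,b)$ and to the ``ratio'' variable $t$; this is why the constant $K$ appears squared in the final formula.
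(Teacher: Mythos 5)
Your chain of estimates is exactly the one in the paper's proof of Proposition~\ref{p34}: upper bound in~(\ref{e30}), then (i), (ii), (iii) applied twice, then the lower bound in~(\ref{e30}), yielding $\eta(t)=CK^2\varphi_1(t)$, with (iv) guaranteeing $\eta$ is a homeomorphism. The argument is correct and essentially identical to the paper's (the paper additionally remarks that the left inequality in~(\ref{e30}) together with (iv) makes $f$ nonconstant, but this is a cosmetic difference).
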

\begin{proof}
Let $x,a,b \in X$ and $d(x,a)\leqslant td(x,b)$, $t> 0$. Using consecutively the right inequality in~(\ref{e30}), the previous inequality, (i), (ii), (iii), and the left inequality in~(\ref{e30}), we get
\begin{multline}\label{e35}
\rho(f(x),f(a))\leqslant \varphi_2(d(x,a))\leqslant \varphi_2(td(x,b)) \\ \leqslant C\varphi_2(t)\varphi_2(d(x,b)) \leqslant CK^2\varphi_1(t)\varphi_1(d(x,b))\leqslant CK^2\varphi_1(t)\rho(f(x),f(b)).
\end{multline}
The left inequality in~(\ref{e30}) and condition (iv) imply that $f$ is nonconstant. Hence, by Definition~\ref{d12} it follows from~(\ref{e35}) that $f$ is an $\eta$-quasisymmetric mapping with $\eta(t)=CK^2\varphi_1(t)$.
\end{proof}

\begin{example}\label{e3}
Let $\varphi_1(t)=C_1t^{\alpha}$, $\varphi_2(t)=C_2t^{\alpha}$, $0<C_1\leqslant C_2$. Then, taking $C=1/C_2$, $K=C_2/C_1$ we have that $f$ is an $\eta$-quasisymmetric mapping with $\eta(t)=\frac{C_2}{C_1}t^{\alpha}$.
\end{example}

\begin{example}
Analogously, if $\varphi_1(t)=C_1\max\{t^{\alpha}, t^{\frac{1}{\alpha}}\}$,
$\varphi_2(t)=C_2\max\{t^{\alpha}, t^{\frac{1}{\alpha}}\}$,
$0<C_1\leqslant C_2$, then $f$ is an $\eta$-quasisymmetric mapping with $\eta(t)=\frac{C_2}{C_1}\max\{t^{\alpha}, t^{\frac{1}{\alpha}}\}$.
\end{example}

\begin{remark}
Condition (ii) is known as $\Delta'$-condition, see~\cite[p.~43]{KR58}. Some another examples of functions satisfying this condition are the following: $\varphi(t)=|t|^{\alpha}(|\ln|t|+1|)$, $\alpha>1$; $\varphi(t)=(1+|t|)\ln(1+|t|)-|t|$.
\end{remark}

The well-known concept of bi-Lipschitz mappings can be easily generalized to the case of semimetric spaces.
\begin{definition}\label{d31}
Let $(X,d)$, $(Y,\rho)$ be semimetric spaces. A mapping $f\colon X \to Y$ is called \emph{$L$-bi-Lipschitz} if there exists $L\geqslant 1$ such that the relation
\begin{equation}\label{e31}
\frac{1}{L} d(x,y)\leqslant \rho(f(x),f(y))\leqslant L d(x,y).
\end{equation}
holds for all $x,y \in X$.
\end{definition}

Setting in Example~\ref{e3} $\alpha =1$, $C_1=\frac{1}{L}$, $C_2=L$, $L\geqslant 1$
we get as a corollary the following assertion.
\begin{corollary}
Every $L$-bi-Lipschitz mapping is an $\eta$-quasisymmetric mapping with $\eta(t)=L^2t$.
\end{corollary}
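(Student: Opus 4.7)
The plan is to recognize that the bi-Lipschitz double inequality from Definition~\ref{d31} is precisely a linear instance of the hypothesis~(\ref{e30}) of Proposition~\ref{p34}, so the corollary should drop out by invoking Example~\ref{e3} with appropriately chosen constants.

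Concretely, I would take $\varphi_1(t)=\frac{1}{L}t$ and $\varphi_2(t)=Lt$, so that the bi-Lipschitz inequality~(\ref{e31}) rewrites verbatim as
\[
\varphi_1(d(x,y))\leqslant \rho(f(x),f(y))\leqslant \varphi_2(d(x,y)).
\]
Because $L\geqslant 1$, the prerequisites $\varphi_1(0)=\varphi_2(0)=0$ and the pointwise bound $\varphi_1\leqslant \varphi_2$ are immediate. These $\varphi_i$ are power functions with exponent $\alpha=1$ and leading coefficients $C_1=1/L$ and $C_2=L$ satisfying $0<C_1\leqslant C_2$, which is exactly the setting of Example~\ref{e3}.

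Applying that example then yields at once that $f$ is $\eta$-quasisymmetric with
\[
\eta(t)=\frac{C_2}{C_1}\,t^{\alpha}=L^2 t,
\]
which is the desired conclusion.

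There is essentially no obstacle here, since the real work was done inside Proposition~\ref{p34} and Example~\ref{e3}; the only thing one has to verify is that the linear $\varphi_i$ fit into those hypotheses with the claimed constants. If one preferred an entirely self-contained argument without citing the example, one would check the four conditions of Proposition~\ref{p34} directly: $\varphi_2(t)=Lt$ is visibly nondecreasing; the $\Delta'$-type inequality $\varphi_2(uv)\leqslant C\varphi_2(u)\varphi_2(v)$ holds with $C=1/L$ (with equality); the comparison $\varphi_2\leqslant K\varphi_1$ holds with $K=L^2$; and $\varphi_1$ is a linear homeomorphism of $[0,\infty)$. The formula $\eta(t)=CK^2\varphi_1(t)$ then gives $\eta(t)=\tfrac{1}{L}\cdot L^4\cdot \tfrac{t}{L}=L^2t$, matching the claim.
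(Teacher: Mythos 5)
Your proposal is correct and follows exactly the paper's own route: the authors likewise obtain the corollary by setting $\alpha=1$, $C_1=\frac{1}{L}$, $C_2=L$ in Example~\ref{e3}, which is itself the specialization of Proposition~\ref{p34} you verify directly. The constants you compute ($C=1/L$, $K=L^2$, hence $\eta(t)=CK^2\varphi_1(t)=L^2t$) match the paper's.
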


\section{Properties of quasisymmetric mappings to preserve \\ the structures of spaces}\label{s2}

\subsection{Property to preserve triangle inequalities}

Metric-preserving functions were in detail studied by many  mathematicians, see, e.g.,~\cite{D96,DM13}.  Recall that a function $f\colon \RR^1\to \RR^1$ preserves metric if the composition $f\circ d$ is a metric on $X$ for any metric space $(X,d)$. In this paper we understand the property to preserve metric (or equivalently triangle inequality) in another way.  In Theorem~\ref{p24} we describe $\eta$-quasisymmetric mappings $f$ for which the image $f(X)$ of a semimetric space $X$ with the triangle function $\Phi_1$ is a semimetric space with another triangle function $\Phi_2$. As a corollaries we obtain conditions under which $f$ preserves b-metricity and ultrametricity.

For the definition of triangle function see Definition~\ref{d2}.

\begin{theorem}\label{p24}
Let $(X,d)$ be a semimetric space with the triangle function $\Phi_1$, $(Y,\rho)$ be a semimetric space and let $f\colon X\to Y$ be a surjective $\eta$-quasisymmetric mapping.
Suppose that the following conditions hold for $\Phi_1$ and for some function $\Phi_2\colon \RR^+\times \RR^+ \to \RR^+$:
\begin{itemize}
  \item [(i)]  $\Phi_2$ is symmetric, monotone increasing in both of its arguments and satisfies $\Phi_2(0,0)=0$,
  \item [(ii)] $\lambda \Phi_1(x,y) \leqslant  \Phi_1(\lambda x,\lambda y)$   and $\Phi_2(\lambda x,\lambda y) \leqslant \lambda \Phi_2(x,y)$ for all $\lambda>0$,
  \item [(iii)]  For all $t_1, t_2\in \RR_+\setminus \{0\}$ the inequality
\begin{equation}\label{e27}
1\leqslant \Phi_1\left(\frac{1}{t_1},\frac{1}{t_2}\right)
\, \text{ implies } \,
1\leqslant \Phi_2\left(\frac{1}{\eta(t_1)},\frac{1}{\eta(t_2)}\right),
\end{equation}
  \item [(iv)]  $a\leqslant \Phi_2(a,0)$  for $a>0$.
\end{itemize}
Then $\Phi_2$ is a triangle function for the space $(Y,\rho)$.
\end{theorem}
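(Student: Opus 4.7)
The plan is to verify the conditions of Definition~\ref{d2}: symmetry, monotonicity, $\Phi_2(0,0)=0$, and the generalized triangle inequality in $(Y,\rho)$. The first three are handed to us verbatim by condition~(i), so everything hinges on showing
$$
\rho(u,v)\leqslant \Phi_2(\rho(u,w),\rho(v,w))\qquad\text{for all } u,v,w\in Y.
$$
By surjectivity of $f$, pick $x,y,z\in X$ with $f(x)=u$, $f(y)=v$, $f(z)=w$. I would first dispose of the degenerate cases: if $d(x,y)=0$ then $\rho(u,v)=0$ and the inequality is trivial; if $d(x,z)=0$ (so $u=w$) then the claim reduces to $\rho(v,u)\leqslant \Phi_2(0,\rho(v,u))$, which follows from symmetry (i) together with condition (iv); the case $d(y,z)=0$ is symmetric. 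If $d(x,z)>0$ but $\rho(u,w)=0$, the quasisymmetry inequality (\ref{e0}) with $a=y$, $b=z$, $t=d(x,y)/d(x,z)$ forces $\rho(u,v)=0$ and the claim is again trivial.

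For the main case, assume all three distances in $X$ are positive and set
$$
t_1=\frac{d(x,y)}{d(x,z)},\qquad t_2=\frac{d(x,y)}{d(y,z)}.
$$
Applying quasisymmetry with center $x$ (respectively $y$) yields
$$
\rho(u,v)\leqslant \eta(t_1)\,\rho(u,w),\qquad \rho(u,v)\leqslant \eta(t_2)\,\rho(v,w),
$$
so the candidates $A:=\rho(u,v)/\eta(t_1)$ and $B:=\rho(u,v)/\eta(t_2)$ satisfy $A\leqslant \rho(u,w)$ and $B\leqslant \rho(v,w)$. Monotonicity of $\Phi_2$ then gives
$$
\Phi_2(A,B)\leqslant \Phi_2(\rho(u,w),\rho(v,w)),
$$
so it suffices to show $\rho(u,v)\leqslant \Phi_2(A,B)$.

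The bridge from $X$ to $Y$ is condition (iii), and to trigger it I need the hypothesis $1\leqslant \Phi_1(1/t_1,1/t_2)$. This is where the first half of property (ii) pays off: the triangle inequality $d(x,y)\leqslant \Phi_1(d(x,z),d(y,z))$ combined with the scaling bound $\tfrac{1}{d(x,y)}\Phi_1(d(x,z),d(y,z))\leqslant \Phi_1(1/t_1,1/t_2)$ (apply (ii) with $\lambda=1/d(x,y)$) immediately yields $1\leqslant \Phi_1(1/t_1,1/t_2)$. Condition (iii) then upgrades this to $1\leqslant \Phi_2(1/\eta(t_1),1/\eta(t_2))$. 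Multiplying through by $\rho(u,v)$ and invoking the second half of (ii) (applied with $\lambda=1/\rho(u,v)$ to the arguments $A,B$) produces
$$
\rho(u,v)\leqslant \rho(u,v)\cdot\Phi_2\bigl(1/\eta(t_1),1/\eta(t_2)\bigr)\leqslant \Phi_2(A,B),
$$
which combined with the earlier monotonicity step completes the proof.

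The step I expect to be the main obstacle — or at least the only nonmechanical one — is correctly aligning the directions of the two halves of condition (ii): for $\Phi_1$ we need to pull the factor $1/d(x,y)$ \emph{inside} (super-homogeneity), while for $\Phi_2$ we need to pull the factor $\rho(u,v)$ \emph{outside} (sub-homogeneity). Once the bookkeeping of scaling is set up this way, the hypothesis (iii) functions as a clean translation between a statement about $\Phi_1$ with ratios of $d$-distances and a statement about $\Phi_2$ with ratios of $\rho$-distances, and everything lines up. The edge cases involving zero distances are the other place where care is needed, and this is precisely the role of condition (iv).
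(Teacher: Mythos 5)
Your proposal is correct and follows essentially the same route as the paper's proof: the same choice of $t_1=d(x,y)/d(x,z)$, $t_2=d(x,y)/d(y,z)$, the same use of the first half of (ii) to obtain $1\leqslant\Phi_1(1/t_1,1/t_2)$, condition (iii) as the bridge, and the second half of (ii) plus monotonicity to conclude (you merely apply the homogeneity and monotonicity steps in the opposite order). Your explicit treatment of the degenerate cases is somewhat more careful than the paper's one-line appeal to condition (iv), but the argument is the same.
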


\begin{proof}
Let $x',y',z' \in Y$ be different points and let $x=f^{-1}(x'), y=f^{-1}(y'), z=f^{-1}(z')$. Hence, $d(x,y)\leqslant \Phi_1(d(x,z),d(z,y))$ and by (ii)
\begin{equation}\label{e28}
1\leqslant \Phi_1\left(\frac{d(x,z)}{d(x,y)},\frac{d(z,y)}{d(x,y)}\right).
\end{equation}
Set
$$
\frac{d(x,y)}{d(x,z)}= t_1, \quad \frac{d(x,y)}{d(z,y)}= t_2.
$$
Hence,
\begin{equation}\label{e29}
1\leqslant \Phi_1 \left(\frac{1}{t_1}, \frac{1}{t_2}\right).
\end{equation}
By~(\ref{e0}) we have
$$
\rho(f(x),f(y))\leqslant \eta(t_1) \rho(f(x),f(z)), \quad
\rho(f(x),f(y))\leqslant \eta(t_2) \rho(f(z),f(y))
$$
or equivalently,
\begin{equation}\label{e210}
\rho(x',y')\leqslant \eta(t_1) \rho(x',z'), \quad
\rho(x',y')\leqslant \eta(t_2) \rho(z',y').
\end{equation}

By~(\ref{e29}),~(\ref{e27}), (i) and ~(\ref{e210}) we have
$$
1\leqslant \Phi_2 \left(\frac{1}{\eta(t_1)}, \frac{1}{\eta(t_2)}\right)
\leqslant \Phi_2 \left(\frac{\rho(x',z')}{\rho(x',y')}, \frac{\rho(z',y')}{\rho(x',y')}\right) \stackrel{\text{(ii)}}{\leqslant}\frac{1}{\rho(x',y')} \Phi_2(\rho(x',z'),\rho(z',y')).
$$
Hence, the inequality
\begin{equation}\label{i34}
\rho(x',y')\leqslant \Phi_2(\rho(x',z'),\rho(z',y'))
\end{equation}
 follows.

In the case when among the points $x', y', z' \in Y$ there are at least two equal points inequality~(\ref{i34}) easily follows from condition (iv).
\end{proof}

\begin{remark}\label{r23}
The most important triangle functions $\Phi(u,v)$ which generate well-known type of metrics and their generalizations are $u+v$ (metric), $K(u+v)$ ($b$-metric with $K\geqslant 1$), $\max\{u,v\}$ (ultrametric). Note that this proposition describes also cases when $\Phi_1$ and $\Phi_2$ are different triangle functions.
\end{remark}

If the usual triangle inequality is replaced by $d(x,y)\leqslant K(d(x,z)+d(z,y))$, $K\geqslant 1$, then  $(X,d)$ is called a \emph{b-metric space}. Initially, the definition of a b-metric space was introduced by Czerwik~\cite{C93} in 1993 as above only with the fixed $K=2$. After that, in 1998, Czerwik~\cite{C98} generalized this notion where the constant 2 was replaced by a constant $K\geqslant 1$, also with the same name b-metric.

\begin{corollary}\label{c3.1}
Let $(X,d)$ be a b-metric space with the coefficient $K_1$, $(Y,\rho)$ be a semimetric space, and let $f\colon X\to Y$ be a surjective $\eta$-quasisymmetric mapping. If  there exists $K_2\geqslant 1$ such that for all $t_1, t_2\in \RR^+\setminus\{0\}$
\begin{equation*}
\left( 1 \leqslant K_1\left(\frac{1}{t_1} + \frac{1}{t_2}\right) \right)
\Rightarrow
\left( 1 \leqslant K_2\left(\frac{1}{\eta(t_1)}+\frac{1}{\eta(t_2)}\right) \right),
\end{equation*}
then $\rho$ is a b-metric with the coefficient $K_2$.
\end{corollary}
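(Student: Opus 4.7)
The plan is to obtain this corollary as a direct specialization of Theorem~\ref{p24}, taking $\Phi_1(u,v) = K_1(u+v)$ and $\Phi_2(u,v) = K_2(u+v)$. Since $(X,d)$ is assumed to be a b-metric space with coefficient $K_1$, this choice of $\Phi_1$ is indeed a triangle function for $d$, so the hypotheses of Theorem~\ref{p24} concerning the source space are already in place, and it remains only to verify the four structural conditions (i)--(iv) for this particular pair $(\Phi_1,\Phi_2)$.

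Conditions (i) and (iv) should fall out immediately from the form of $\Phi_2$: the function $K_2(u+v)$ is symmetric, monotone increasing in each argument, and vanishes at the origin, while $\Phi_2(a,0) = K_2 a \geqslant a$ for all $a>0$ since $K_2\geqslant 1$. Condition (ii) follows because both $\Phi_1$ and $\Phi_2$ are positively homogeneous of degree one, so in fact $\lambda \Phi_i(x,y) = \Phi_i(\lambda x, \lambda y)$ with equality rather than inequality, which is stronger than what is required. Condition (iii) is exactly the hypothesis of the corollary, once one reads off that
$$
\Phi_1\!\left(\tfrac{1}{t_1},\tfrac{1}{t_2}\right) = K_1\!\left(\tfrac{1}{t_1}+\tfrac{1}{t_2}\right),
\qquad
\Phi_2\!\left(\tfrac{1}{\eta(t_1)},\tfrac{1}{\eta(t_2)}\right) = K_2\!\left(\tfrac{1}{\eta(t_1)}+\tfrac{1}{\eta(t_2)}\right).
$$

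Once these four checks are assembled, Theorem~\ref{p24} delivers that $\Phi_2$ is a triangle function for $(Y,\rho)$, which unfolds to
$$
\rho(x',y') \leqslant K_2\bigl(\rho(x',z') + \rho(z',y')\bigr)
$$
for all $x',y',z' \in Y$, i.e., $\rho$ is a b-metric with coefficient $K_2$, as required.

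Since the argument is a direct invocation of the already-proved Theorem~\ref{p24}, there is essentially no obstacle to surmount; the only point demanding any attention is matching the shape of condition (iii) to the hypothesis of the corollary, but the latter has been stated in precisely the form the theorem requires, so the verification is mechanical.
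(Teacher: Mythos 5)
Your proposal is correct and follows exactly the paper's own route: the paper proves this corollary by the single line ``set $\Phi_1(x,y)=K_1(x+y)$ and $\Phi_2(x,y)=K_2(x+y)$ in Theorem~\ref{p24},'' and your verification of conditions (i)--(iv) simply spells out the details that the paper leaves implicit. All four checks are accurate, including the use of $K_2\geqslant 1$ for condition (iv) and homogeneity for condition (ii).
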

\begin{proof}
It suffices to set in Theorem~\ref{p24} $\Phi_1(x,y)=K_1(x+y)$, $\Phi_2(x,y)=K_2(x+y)$.
\end{proof}

\begin{corollary}
In the case $K_1=K_2=1$ we the have a sufficient condition which guarantees that $f$ is metric preserving.
\end{corollary}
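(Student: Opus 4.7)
The plan is to apply Corollary~\ref{c3.1} with the specialization $K_1 = K_2 = 1$ and to observe that this specialization recovers the usual metric axioms at both the source and the target.

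First I would record the trivial but essential identification: a $b$-metric with coefficient $K=1$ satisfies $d(x,y) \leqslant d(x,z) + d(z,y)$, which is the classical triangle inequality; combined with axioms (i) and (ii) of a semimetric, this is exactly an ordinary metric. Consequently the hypothesis of Corollary~\ref{c3.1} with $K_1 = 1$ reduces to ``$(X,d)$ is a metric space and $f\colon X \to Y$ is a surjective $\eta$-quasisymmetric mapping.''

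Next I would substitute $K_1 = K_2 = 1$ in the implication appearing in Corollary~\ref{c3.1}; it becomes
$$
1 \leqslant \frac{1}{t_1} + \frac{1}{t_2}
\;\Longrightarrow\;
1 \leqslant \frac{1}{\eta(t_1)} + \frac{1}{\eta(t_2)},
\qquad t_1,t_2 \in \RR^+\setminus\{0\}.
$$
Invoking Corollary~\ref{c3.1} directly under this hypothesis delivers that $\rho$ is a $b$-metric on $Y$ with coefficient $K_2 = 1$, i.e.\ a genuine metric by the first observation. Thus the condition displayed above on the control function $\eta$ is sufficient to ensure that the $\eta$-quasisymmetric surjection $f$ transports the metric structure of $(X,d)$ to a metric structure on $(Y,\rho)$, which is exactly the content of ``$f$ is metric preserving'' in the sense used in this section.

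No genuine obstacle arises, since the statement is a direct specialization of the previous corollary. The only point I would flag explicitly is the identification of a $b$-metric with coefficient $1$ with an ordinary metric, which is what allows both the hypothesis (on $(X,d)$) and the conclusion (on $(Y,\rho)$) to be read as statements about metric spaces rather than about genuinely generalized triangle inequalities.
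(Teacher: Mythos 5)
Your proposal is correct and matches the paper's (implicit) argument: the paper states this corollary without proof precisely because it is the direct specialization $K_1=K_2=1$ of Corollary~\ref{c3.1}, together with the observation that a $b$-metric with coefficient $1$ is an ordinary metric. Nothing further is needed.
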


Recall that an \emph{ultrametric} is a metric for which the strong triangle inequality $d(x, y)\leqslant \max \{d(x, z), d(z, y)\}$ holds. In this case the pair $(X,d)$ is called an \emph{ultrametric space}. Note that, the ultrametric inequality was formulated by F.~Hausdorff in 1934 and ultrametric spaces were introduced by M. Krasner~\cite{Kr44} in 1944.

\begin{corollary}\label{c3.5}
Let $(X,d)$ be an ultrametric space, $(Y,\rho)$ be a semimetric space and let $f\colon X\to Y$ be a surjective $\eta$-quasisymmetric mapping such that $\eta(1)=1$. Then $(Y,\rho)$ is also an ultrametric space.
\end{corollary}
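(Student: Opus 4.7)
The plan is to apply Theorem~\ref{p24} with $\Phi_1(u,v)=\Phi_2(u,v)=\max\{u,v\}$, which is the triangle function characterizing ultrametric spaces (see Remark~\ref{r23}). Since $(X,d)$ is ultrametric, $\Phi_1$ is a triangle function for $(X,d)$, so the hypotheses on $(X,d)$ are satisfied. It then suffices to verify the four conditions (i)--(iv) of Theorem~\ref{p24} for this choice of $\Phi_2$.

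Conditions (i), (ii) and (iv) are immediate: the maximum is symmetric, componentwise non-decreasing, vanishes at the origin, is positively homogeneous (so (ii) holds with equality), and $\max\{a,0\}=a$ for $a>0$. The step that does the actual work is (iii), and I expect this to be the main (though still short) obstacle. Assume $1\leqslant \max\{1/t_1,1/t_2\}$ with $t_1,t_2>0$; then $\min\{t_1,t_2\}\leqslant 1$. Since $\eta\colon[0,\infty)\to[0,\infty)$ is a homeomorphism, it is strictly increasing (being continuous and bijective on an interval) and satisfies $\eta(0)=0$; combined with the hypothesis $\eta(1)=1$, this yields $\eta(\min\{t_1,t_2\})\leqslant 1$, hence $\max\{1/\eta(t_1),1/\eta(t_2)\}\geqslant 1$, which is exactly the required implication.

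Having verified all four conditions, Theorem~\ref{p24} gives that $\Phi_2(u,v)=\max\{u,v\}$ is a triangle function for $(Y,\rho)$, i.e.\ the strong triangle inequality
\[
\rho(x',y')\leqslant \max\{\rho(x',z'),\rho(z',y')\}
\]
holds for all $x',y',z'\in Y$. Since $(Y,\rho)$ is already a semimetric space (so positivity and symmetry hold) and the strong triangle inequality clearly implies the ordinary triangle inequality, $\rho$ is in fact a metric, and therefore an ultrametric. This completes the proof.
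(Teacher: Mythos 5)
Your proposal is correct and follows exactly the paper's own route: apply Theorem~\ref{p24} with $\Phi_1=\Phi_2=\max$, observe that (i), (ii), (iv) are immediate, and verify (iii) using the fact that $\eta$ is strictly increasing with $\eta(0)=0$ and $\eta(1)=1$. Your write-up is, if anything, slightly more complete than the paper's, since you explicitly check condition (iv), which the paper omits.
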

\begin{proof}
Consider Theorem~\ref{p24} with the triangle functions $\Phi_1(u,v)=\Phi_2(u,v)=\max\{u,v\}$. Clearly, conditions (i) and (ii) hold. Let us show condition (iii), i.e., that
 for all $t_1, t_2\in \RR^+$ the inequality
\begin{equation*}
\max \left\{ \frac{1}{t_1},\frac{1}{t_2}\right\}\geqslant 1
\, \text{ implies } \,
\max\left\{\frac{1}{\eta(t_1)},\frac{1}{\eta(t_2)}\right\}\geqslant 1.
\end{equation*}
In other words it is enough to show that if one of the numbers $t_1, t_2$ is smaller or equal to 1, then one of the numbers $\eta(t_1), \eta(t_2)$ is also smaller or equal to 1. But this easily follows from the fact that $\eta$ is strictly increasing and $\eta(0)=0$, $\eta(1)=1$.
\end{proof}

Corollaries~\ref{c3.1} and~\ref{c3.5} were initially obtained in~\cite{PS21B} and~\cite{PS21U}, respectively, by direct proofs. See also~\cite{PS21B} and~\cite{PS21U} for more results related to  quasisymmetric embeddings between b-metric spaces and ultrametric spaces, respectively.

\subsection{Ptolemy's inequality preserving mappings}
A metric space $(X,d)$ is called Ptolemaic if for all $x, y, z, t \in X$ the inequality
\begin{equation}\label{e270}
d(x,z)d(t,y)\leqslant d(x,y)d(t,z)+d(x,t)d(y,z)
\end{equation}
holds, see, e.g.,~\cite{Sch40, Sch52}. Every pre-Hilbert space is Ptolemaic (see~\cite[9.7.3.8, 10.9.2]{Ber09} for instance), and each linear quasinormed Ptolemaic space is a pre-Hilbert space~\cite{Sch52}.  The Ptolemy theorem, known since antiquity, states that~(\ref{e270}) turns into equality when $x$, $y$, $z$, and $t$ are the vertices of a convex quadrilateral inscribed into a circle. Ptolemaic spaces still attract attention of many mathematicians, see e.g.,~\cite{As18, DP11, MS13, BFW09}.

In what follows under Ptolemaic spaces we understand semimetric spaces $(X,d)$ for which inequality~(\ref{e270}) holds. Note that~(\ref{e270}) does not imply the standard triangle inequality in $(X,d)$.

\begin{proposition}\label{p3.11}
Let $(X,d)$ be a Ptolemaic space, $(Y,\rho)$ be a semimetric space and let $f\colon X\to Y$ be a surjective $\eta$-quasisymmetric mapping. If for all $t_1, t_2, t_3, t_4\in \RR^+$ the inequality
\begin{equation}\label{e271}
t_1t_2t_3t_4 \leqslant t_1t_2+t_3t_4
\, \text{ implies } \,
\eta(t_1)\eta(t_2)\eta(t_3)\eta(t_4)\leqslant \eta(t_1)\eta(t_2)+\eta(t_3)\eta(t_4),
\end{equation}
then $(Y,\rho)$ is also Ptolemaic.
\end{proposition}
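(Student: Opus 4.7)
The plan is to encode the Ptolemaic inequality in $(X,d)$ as an inequality of exactly the form $t_1 t_2 t_3 t_4 \leqslant t_1 t_2 + t_3 t_4$ appearing in the hypothesis, by a careful choice of four ratios of $d$-distances, and then push the resulting inequality over to $(Y,\rho)$ via $\eta$-quasisymmetry. First, I fix four points $x',y',z',t' \in Y$. If two of them coincide, a short case check (using only the symmetry of $\rho$ and that $\rho(a,a)=0$) shows that Ptolemy's inequality for $\rho$ collapses to an identity, so I may assume the four points to be pairwise distinct; by surjectivity of $f$ I pick preimages $x,y,z,t \in X$, which are then automatically pairwise distinct, and hence every $d$-distance between them is strictly positive.

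Next I set
$$
t_1 = \frac{d(x,z)}{d(x,y)}, \qquad t_2 = \frac{d(t,y)}{d(t,z)}, \qquad t_3 = \frac{d(x,z)}{d(x,t)}, \qquad t_4 = \frac{d(t,y)}{d(y,z)}.
$$
A short calculation gives $t_1 t_2 = d(x,z)d(t,y)/[d(x,y)d(t,z)]$ and $t_3 t_4 = d(x,z)d(t,y)/[d(x,t)d(y,z)]$. Dividing the Ptolemaic inequality $d(x,z)d(t,y) \leqslant d(x,y)d(t,z) + d(x,t)d(y,z)$ by $d(x,z)d(t,y)$ yields $1 \leqslant \frac{1}{t_1 t_2} + \frac{1}{t_3 t_4}$, which after multiplication by $t_1 t_2 t_3 t_4$ is exactly the hypothesis of~(\ref{e271}). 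Invoking~(\ref{e271}) I obtain $\frac{1}{\eta(t_1)\eta(t_2)} + \frac{1}{\eta(t_3)\eta(t_4)} \geqslant 1$.

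The last step is to convert this inequality into Ptolemy's inequality for $\rho$. Applying the defining implication~(\ref{e0}) of $\eta$-quasisymmetry four times, with base points $x$, $t$, $x$ and $y$ respectively, I obtain
$$
\rho(x',z') \leqslant \eta(t_1)\rho(x',y'), \qquad \rho(t',y') \leqslant \eta(t_2)\rho(t',z'),
$$
$$
\rho(x',z') \leqslant \eta(t_3)\rho(x',t'), \qquad \rho(y',t') \leqslant \eta(t_4)\rho(y',z').
$$
Multiplying the first two estimates and multiplying the last two (using $\rho(y',t')=\rho(t',y')$), then dividing each product by $\rho(x',z')\rho(t',y')$ and summing, I arrive at
$$
\frac{\rho(x',y')\rho(t',z') + \rho(x',t')\rho(y',z')}{\rho(x',z')\rho(t',y')} \geqslant \frac{1}{\eta(t_1)\eta(t_2)} + \frac{1}{\eta(t_3)\eta(t_4)} \geqslant 1,
$$
which is precisely Ptolemy's inequality for $\rho$.

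The main obstacle is identifying the correct quadruple of ratios. They must be chosen so that, simultaneously, (a) the Ptolemaic inequality in $X$ rearranges into the exact algebraic shape $t_1 t_2 t_3 t_4 \leqslant t_1 t_2 + t_3 t_4$ demanded by~(\ref{e271}), and (b) each of the four factors $\eta(t_i)$ can be produced by a single application of~(\ref{e0}) with a legitimate base point among $x,y,z,t$. Once these $t_i$ are in place, the argument is essentially bookkeeping: two multiplicative estimates, one application of the hypothesis, and one reciprocal rearrangement, together with the trivial degenerate case.
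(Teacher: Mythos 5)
Your proposal is correct and follows essentially the same route as the paper's own proof: the same four ratios $t_1=\frac{d(x,z)}{d(x,y)}$, $t_2=\frac{d(t,y)}{d(t,z)}$, $t_3=\frac{d(x,z)}{d(x,t)}$, $t_4=\frac{d(t,y)}{d(y,z)}$, the same four applications of the quasisymmetry implication, and the same reciprocal rearrangement of~(\ref{e271}). Your treatment of the degenerate case (two coinciding points) is in fact spelled out more carefully than in the paper, which only asserts it is "almost evident."
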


\begin{proof}
Let $x',y',z',t' \in Y$ be different points and let $x=f^{-1}(x'), y=f^{-1}(y'), z=f^{-1}(z'), t=f^{-1}(t')$.  By~(\ref{e270}) we have
\begin{equation}\label{e281}
1\leqslant \frac{d(x,y)}{d(x,z)}\frac{d(t,z)}{d(t,y)}+\frac{d(x,t)}{d(x,z)}\frac{d(y,z)}{d(t,y)}.
\end{equation}
Set
$$
\frac{d(x,z)}{d(x,y)}= t_1, \quad \frac{d(t,y)}{d(t,z)}= t_2, \quad \frac{d(x,z)}{d(x,t)}= t_3, \quad \frac{d(t,y)}{d(y,z)}= t_4.
$$
Hence,
\begin{equation}\label{e291}
1\leqslant \frac{1}{t_1}\frac{1}{t_2}+\frac{1}{t_3}\frac{1}{t_4}.
\end{equation}

By~(\ref{e0}) we have
$$
\rho(f(x),f(z))\leqslant \eta(t_1) \rho(f(x),f(y)), \quad
\rho(f(t),f(y))\leqslant \eta(t_2) \rho(f(t),f(z)),
$$
$$
\rho(f(x),f(z))\leqslant \eta(t_3) \rho(f(x),f(t)), \quad
\rho(f(t),f(y))\leqslant \eta(t_4) \rho(f(y),f(z)).
$$
Hence,
$$
\frac{1}{\eta(t_1)}\frac{1}{\eta(t_2)}+
\frac{1}{\eta(t_3)}\frac{1}{\eta(t_4)}
\leqslant \frac{\rho(x',y')}{\rho(x',z')}\frac{\rho(t',z')}{\rho(t',y')}+
\frac{\rho(x',t')}{\rho(x',z')}\frac{\rho(y',z')}{\rho(t',y')}.
$$
Using~(\ref{e291}) and condition~(\ref{e271}) we obtain inequality~(\ref{e270}) for  $x', y', z', t'$,  which completes the proof.

In the case when among the points $x',y',z',t' \in Y$ there are at least two different points inequality~(\ref{e270}) is almost evident.
\end{proof}

The following assertion is well-known, see, e.g., Section 2.12 in~\cite{HLP52}.
\begin{lemma}\label{l3.1}
If $0<\alpha\leqslant 1$, then for $u,v\geqslant 0$ the inequality
$(u+v)^{\alpha}\leqslant u^{\alpha}+v^{\alpha}$
holds.
\end{lemma}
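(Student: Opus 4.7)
The plan is to reduce the two-variable inequality to a one-variable estimate by homogeneity. First I would dispose of the trivial case $u=v=0$, in which both sides vanish. Otherwise $u+v>0$, and dividing both sides of the desired inequality by $(u+v)^{\alpha}$ converts it to the equivalent statement
$$
1 \leqslant s^{\alpha} + (1-s)^{\alpha},
$$
where $s = u/(u+v) \in [0,1]$. This normalization is harmless because $x \mapsto x^{\alpha}$ is positively homogeneous of degree $\alpha$.

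The core step is the elementary one-variable estimate $x^{\alpha} \geqslant x$ for $x \in [0,1]$ and $\alpha \in (0,1]$. For $x=0$ this is immediate, while for $x \in (0,1]$ one has $x^{\alpha-1} \geqslant 1$ because $\alpha - 1 \leqslant 0$, and multiplying through by $x$ yields $x^{\alpha} \geqslant x$. Applying this estimate separately to $s$ and to $1-s$ and adding gives
$$
s^{\alpha} + (1-s)^{\alpha} \;\geqslant\; s + (1-s) \;=\; 1,
$$
which is exactly the normalized inequality.

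Since the result is classical (and cited to Hardy--Littlewood--P\'olya), no serious obstacle is expected. The only subtlety worth flagging is the degenerate case $u+v = 0$, which must be excluded before the division by $(u+v)^{\alpha}$; handling it as a preliminary line keeps the argument clean. As an alternative route, one could instead invoke the fact that any concave function $\varphi \colon [0,\infty) \to \RR$ with $\varphi(0) \geqslant 0$ is subadditive, applied to $\varphi(x) = x^{\alpha}$ (whose concavity follows from $\varphi''(x) = \alpha(\alpha-1)x^{\alpha-2} \leqslant 0$); but the direct normalization argument above is shorter and avoids differentiation.
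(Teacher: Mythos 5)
Your proof is correct. The paper itself gives no argument for this lemma at all---it simply labels it as well known and cites Section 2.12 of Hardy--Littlewood--P\'olya---so there is no ``paper proof'' to compare against; your normalization argument (dividing by $(u+v)^{\alpha}$, reducing to $s^{\alpha}+(1-s)^{\alpha}\geqslant 1$ via the pointwise bound $x^{\alpha}\geqslant x$ on $[0,1]$) is a clean, complete, self-contained justification, and you handle the degenerate case $u+v=0$ properly before dividing.
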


\begin{corollary}\label{c214}
Let $X$ be a Ptolemaic space, $Y$ be a semimetric space and let $f\colon X\to Y$ be a surjective $\eta$-quasisymmetric mapping such that $\eta(t)=t^{\alpha}$, $0<\alpha\leqslant 1$. Then $Y$ is also a Ptolemaic space.
\end{corollary}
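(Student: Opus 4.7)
The plan is to apply Proposition~\ref{p3.11}, so the whole task reduces to checking that the power function $\eta(t)=t^{\alpha}$ with $0<\alpha\leqslant 1$ satisfies the implication~(\ref{e271}). Once that implication is verified, Ptolemy's inequality transfers to $(Y,\rho)$ for free.

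To verify the implication, I would introduce the abbreviations $u:=t_1 t_2$ and $v:=t_3 t_4$. The hypothesis $t_1 t_2 t_3 t_4 \leqslant t_1 t_2 + t_3 t_4$ then reads simply $uv \leqslant u+v$, and the desired conclusion becomes
$$
(uv)^{\alpha} = t_1^{\alpha}t_2^{\alpha}t_3^{\alpha}t_4^{\alpha} \leqslant t_1^{\alpha}t_2^{\alpha}+t_3^{\alpha}t_4^{\alpha} = u^{\alpha}+v^{\alpha}.
$$
Thus everything comes down to showing that for $u,v\geqslant 0$ and $0<\alpha\leqslant 1$ the inequality $uv\leqslant u+v$ forces $(uv)^{\alpha}\leqslant u^{\alpha}+v^{\alpha}$.

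This last step uses two facts in sequence. First, since $x\mapsto x^{\alpha}$ is monotone nondecreasing on $[0,\infty)$, from $uv\leqslant u+v$ we obtain $(uv)^{\alpha}\leqslant (u+v)^{\alpha}$. Second, Lemma~\ref{l3.1} yields $(u+v)^{\alpha}\leqslant u^{\alpha}+v^{\alpha}$, and chaining these two inequalities gives exactly what is needed.

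There is no genuine obstacle here; the only mildly nontrivial point is spotting the substitution $u=t_1t_2$, $v=t_3t_4$ that collapses the four-variable implication of Proposition~\ref{p3.11} into a clean two-variable statement to which Lemma~\ref{l3.1} applies directly. With this reduction the proof is just a one-line application of Proposition~\ref{p3.11}.
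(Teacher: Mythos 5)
Your proposal is correct and coincides with the paper's own proof: both reduce the corollary to verifying implication~(\ref{e271}) for $\eta(t)=t^{\alpha}$, using monotonicity of $x\mapsto x^{\alpha}$ to get $(t_1t_2t_3t_4)^{\alpha}\leqslant (t_1t_2+t_3t_4)^{\alpha}$ and then Lemma~\ref{l3.1} applied to $u=t_1t_2$, $v=t_3t_4$ to conclude. The explicit substitution $u=t_1t_2$, $v=t_3t_4$ is merely a cosmetic repackaging of the same argument.
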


\begin{proof}
Let $t_1, t_2, t_3, t_4$ belong to $\RR^+$ and let the inequality
\begin{equation}\label{e6}
t_1t_2t_3t_4\leqslant t_1t_2+t_3t_4
\end{equation}
hold. Then~(\ref{e6}) implies $t_1^{\alpha}t_2^{\alpha}t_3^{\alpha}t_4^{\alpha}\leqslant (t_1t_2+t_3t_4)^{\alpha}$ and, by Lemma~\ref{l3.1}, we have
$$
(t_1t_2+t_3t_4)^{\alpha}\leqslant t_1^{\alpha}t_2^{\alpha}+t_3^{\alpha}t_4^{\alpha}.
$$
Hence, $Y$ is Ptolemaic by Proposition~\ref{p3.11}.
\end{proof}

\subsection{Metric betweenness preserving mappings}

Let $(X,d)$ be a semimetric space and let $x,y,z$ be different points from $X$. We shall say that the point $y$ lies between $x$ and $z$ if the equality
\begin{equation}\label{l0}
d(x,z)=d(x,y)+d(y,z)
\end{equation}
holds.  This relation is intuitive for points belonging to some straight line, plane or three-dimensional space. K. Menger~\cite[p.~77]{Me28} seems to be the first who formulated the concept of ``metric betweenness'' for general metric
spaces.

Let $X$ and $Y$ be semimetric spaces and let $f\colon X\to Y$ be a mapping.
If $f(y)$ lies between $f(x)$ and $f(z)$ in $Y$ whenever $y$ lies between $x$ and $z$ in $X$, then we say that $f$ preserves metric betweenness.
Note also that equality~(\ref{l0}) implies that the subset $\{x,y,z\}\subseteq X$ is isometrically embeddable in $\RR^1$.

\begin{lemma}\label{l02}
Let $(X,d)$, $(Y,\rho)$ be semimetric spaces and let $f\colon X\to Y$ be an $\eta$-quasisymmetric mapping. If the equalities
\begin{equation}\label{l1}
\frac{1}{\eta\left(\frac{1}{t_1}\right)}+\frac{1}{\eta\left(\frac{1}{t_2}\right)} = 1 \, \,  \text{ and } \, \,
\eta(t_1)+\eta(t_2)= 1
\end{equation}
hold whenever $t_1, t_2\in (0,\infty)$ and $t_1+t_2=1$, then $f$ preserves metric betweenness.

Conversely, if $f$ preserves metric betweenness, then \begin{equation}\label{l2}
\frac{1}{\eta\left(\frac{1}{t_1}\right)}+\frac{1}{\eta\left(\frac{1}{t_2}\right)} \leqslant 1 \, \, \text{ and } \, \,
\eta(t_1)+\eta(t_2)\geqslant 1
\end{equation}
hold whenever there are distinct $x, y, z \in X$ satisfying~(\ref{l0}) such that $d(x,y)=t_1d(x,z)$ and $d(y,z)=t_2d(x,z)$.
\end{lemma}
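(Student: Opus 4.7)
The plan is to apply the two--sided estimate~(\ref{e}) twice, once with $x$ as the base point and once with $z$ as the base point, and then sum the resulting inequalities.

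Whenever $y$ lies between $x$ and $z$, I would set $t_1 = d(x,y)/d(x,z)$ and $t_2 = d(y,z)/d(x,z)$, so that $t_1,t_2\in(0,\infty)$ and $t_1+t_2=1$. Applying~(\ref{e}) with base point $x$ to the pair $(y,z)$ produces
$$
\frac{\rho(f(x),f(z))}{\eta(1/t_1)}\leqslant \rho(f(x),f(y))\leqslant \eta(t_1)\,\rho(f(x),f(z)),
$$
and applying it with base point $z$ to the pair $(y,x)$ produces the analogous bound
$$
\frac{\rho(f(x),f(z))}{\eta(1/t_2)}\leqslant \rho(f(y),f(z))\leqslant \eta(t_2)\,\rho(f(x),f(z)),
$$
after using symmetry of $\rho$.

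For the forward direction, I would sum the two upper bounds and invoke the second equality in~(\ref{l1}) to get $\rho(f(x),f(y))+\rho(f(y),f(z))\leqslant\rho(f(x),f(z))$; then sum the two lower bounds and invoke the first equality in~(\ref{l1}) to get the reverse inequality. Together they yield $\rho(f(x),f(z))=\rho(f(x),f(y))+\rho(f(y),f(z))$. The same lower bounds also show that, when $\rho(f(x),f(z))>0$, the quantities $\rho(f(x),f(y))$ and $\rho(f(y),f(z))$ are positive as well, so the images $f(x),f(y),f(z)$ are automatically distinct; in the remaining degenerate case $\rho(f(x),f(z))=0$ the upper bounds force all three images to coincide.

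The converse is essentially the same calculation read backwards. Given the hypothesis that $f$ preserves metric betweenness and given distinct $x,y,z$ satisfying~(\ref{l0}), the images $f(x),f(y),f(z)$ are distinct and $\rho(f(x),f(z))=\rho(f(x),f(y))+\rho(f(y),f(z))$ with $\rho(f(x),f(z))>0$. Dividing the summed upper bounds by $\rho(f(x),f(z))$ delivers $\eta(t_1)+\eta(t_2)\geqslant 1$, and dividing the summed lower bounds by the same quantity delivers $1/\eta(1/t_1)+1/\eta(1/t_2)\leqslant 1$, which are exactly the inequalities in~(\ref{l2}).

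The main obstacle is purely organizational: choosing the base point in each application of~(\ref{e0}) so that the distance ratio is $t_i$ (rather than $1/t_i$), and recording the subtlety that the forward direction produces the metric--betweenness equation also in the collapsed case, where its conclusion is vacuous because the three $f$-images coincide.
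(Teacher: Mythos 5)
Your proposal is correct and follows essentially the same route as the paper's proof: both derive the four one-sided bounds from~(\ref{e0}) with base points $x$ and $z$, sum the upper and lower estimates separately, and read the result forwards (using~(\ref{l1})) or backwards (using the betweenness equality) as needed. The only difference is that you additionally record the degenerate case $\rho(f(x),f(z))=0$ and the distinctness of the image points, a detail the paper leaves implicit.
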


\begin{proof}
Suppose~(\ref{l1}) is valid whenever $t_1, t_2\in (0,\infty)$ and $t_1+t_2=1$ and let equality~(\ref{l0}) hold.
We have to show that
\begin{equation}\label{emb}
\rho(f(x), f(z))=\rho(f(x), f(y))+\rho(f(y), f(z)).
\end{equation}

Let $t_1, t_2 >0$ be such that $d(x,y)=t_1d(x,z)$, $d(y,z)=t_2d(x,z)$. Then $t_1+t_2=1$. It follows from~(\ref{e0}) that
\begin{equation}\label{k4}
\begin{split}
&\rho(f(x), f(y))\leqslant \eta(t_1)\rho(f(x), f(z)), \\
&\rho(f(y), f(z))\leqslant \eta(t_2)\rho(f(x), f(z))
\end{split}
\end{equation}
and
\begin{equation}\label{k5}
\begin{split}
&\rho(f(x), f(z))\leqslant \eta\left(\frac{1}{t_1}\right)\rho(f(x), f(y)) \\
&\rho(f(x), f(z))\leqslant \eta\left(\frac{1}{t_2}\right)\rho(f(y), f(z)).
\end{split}
\end{equation}
Hence,~(\ref{k4}) implies
\begin{equation}\label{k6}
\rho(f(x), f(y))+\rho(f(y), f(z))\leqslant (\eta(t_1)+\eta(t_2))\rho(f(x), f(z))
\end{equation}
and~(\ref{k5}) implies
\begin{equation}\label{k7}
\rho(f(x), f(y))+\rho(f(y), f(z))\geqslant \rho(f(x), f(z))\left(\frac{1}{\eta\left(\frac{1}{t_1}\right)}
+\frac{1}{\eta\left(\frac{1}{t_2}\right)}\right).
\end{equation}
Clearly,~(\ref{l1}), ~(\ref{k6}) and ~(\ref{k7}) imply equality~(\ref{emb}).

Conversely, let $f$ preserve metric betweenness. Then for all $x, y, z$ satisfying~(\ref{l0}) equality~(\ref{emb}) holds.  Let $t_1, t_2$ be such that  $d(x,y)=t_1d(x,z)$ and $d(y,z)=t_2d(x,z)$.
It is clear that inequalities~(\ref{k4}), (\ref{k5}), (\ref{k6}), and~(\ref{k7}) hold again. Taking into consideration equality~(\ref{emb}) we see that~(\ref{l2}) follows from~(\ref{k6}) and~(\ref{k7}).
\end{proof}

Recall that a function $\Phi(x,y)$ of two variables is antisymmetric if $\Phi(y,x)=-\Phi(x,y)$.

\begin{theorem}\label{t226}
Let $(X,d)$, $(Y,\rho)$ be semimetric spaces and let $f\colon X\to Y$ be an $\eta$-quasisymmetric mapping. If the homeomorphism $\eta$ has the form
\begin{equation}\label{k8}
\eta(t)=\begin{cases}
\frac12+\Psi_1(t,1-t),& t\in [0,1],\\
\frac{1}{\frac12+\Psi_2\left(\frac1t, 1-\frac1t\right)},&t\in [1, \infty),
\end{cases}
\end{equation}
where $\Psi_1$, $\Psi_2$ are some continuous, antisymmetric, strictly increasing in the first variables, defined on $[0,1]\times[0,1]$ functions of two variables such that $\Psi_1(1,0)=\Psi_2(1,0)=1/2$,  then $f$ preserves metric betweenness.
\end{theorem}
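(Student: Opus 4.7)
The plan is to apply the ``if'' direction of Lemma~\ref{l02}: it suffices to verify that for every pair $t_1,t_2\in(0,\infty)$ with $t_1+t_2=1$, both equalities in~(\ref{l1}) hold. Since the two summands are strictly positive and add to $1$, we automatically have $t_1,t_2\in(0,1)$, so $\eta(t_1)$ and $\eta(t_2)$ are governed by the top branch of~(\ref{k8}), while $\eta(1/t_1)$ and $\eta(1/t_2)$ are governed by the bottom branch (since $1/t_i>1$).

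For the second equality in~(\ref{l1}), I would use that $1-t_1=t_2$ and $1-t_2=t_1$, so the top branch gives
$$\eta(t_1)+\eta(t_2)=\left(\tfrac{1}{2}+\Psi_1(t_1,t_2)\right)+\left(\tfrac{1}{2}+\Psi_1(t_2,t_1)\right),$$
and the antisymmetry $\Psi_1(t_2,t_1)=-\Psi_1(t_1,t_2)$ collapses the sum to $1$. For the first equality, substituting $t=1/t_i$ into the bottom branch of~(\ref{k8}) transforms the arguments of $\Psi_2$ into $(t_i,\,1-t_i)=(t_i,t_j)$ for $\{i,j\}=\{1,2\}$, whence $1/\eta(1/t_i)=\tfrac{1}{2}+\Psi_2(t_i,t_j)$; antisymmetry of $\Psi_2$ then yields $1/\eta(1/t_1)+1/\eta(1/t_2)=1$.

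The only real ``obstacle'' is bookkeeping: selecting the correct branch of~(\ref{k8}) on each side of~(\ref{l1}), and verifying that after the substitution $t\mapsto 1/t_i$ the arguments of $\Psi_2$ come out precisely as the pair $(t_i,t_j)$ versus $(t_j,t_i)$ needed to invoke antisymmetry. Note that since $t_1,t_2$ lie strictly inside $(0,1)$, the corner values $\Psi_1(1,0)=\Psi_2(1,0)=1/2$ are not used in this verification; those conditions serve only to ensure continuity of $\eta$ at the junction $t=1$ together with $\eta(0)=0$ and $\eta(1)=1$, so that $\eta$ is a bona fide homeomorphism $[0,\infty)\to[0,\infty)$ as required by Definition~\ref{d12}. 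Consequently, no monotonicity or continuity of $\Psi_1,\Psi_2$ beyond the antisymmetry enters the final step; those hypotheses are needed only to justify that the piecewise formula~(\ref{k8}) defines a legitimate $\eta$.
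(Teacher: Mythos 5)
Your proposal is correct and follows essentially the same route as the paper: reduce to the first part of Lemma~\ref{l02} and use the antisymmetry of $\Psi_1,\Psi_2$ to collapse both sums in~(\ref{l1}) to $1$, noting that $t_1+t_2=1$ forces $t_1,t_2\in(0,1)$ and $1/t_i>1$ so the two branches of~(\ref{k8}) are invoked exactly as you describe. The only difference is presentational: the paper derives~(\ref{k8}) as the general solution of the functional equations $\eta(t)+\eta(1-t)=1$ and $F(s)+F(1-s)=1$ and then uses the continuity, monotonicity and corner conditions to check that $\eta$ is a genuine homeomorphism, whereas you verify the two identities by direct substitution, which is the logically minimal direction for the stated implication.
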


\begin{proof}
According to the first part of Lemma~\ref{l02} it suffices to establish equalities~(\ref{l1}) for $\eta$ given by~(\ref{k8}) whenever $t_1, t_2 \in (0,\infty)$ and $t_1+t_2=1$ and to show that $\eta$ is, indeed, a homeomorphism with $\eta(1)\geqslant 1$. In other words the following two systems must hold:
\begin{equation}\label{l11}
\begin{cases}
\eta(t_1)+\eta(t_2)= 1,\\
t_1+t_2=1,\\
t_1,t_2 \in (0,\infty),
\end{cases}
\end{equation}
and
\begin{equation}\label{l12}
\begin{cases} \frac{1}{\eta\left(\frac{1}{t_1}\right)}+\frac{1}{\eta\left(\frac{1}{t_2}\right)} = 1,\\
t_1+t_2=1,\\
t_1,t_2 \in (0,\infty).
\end{cases}
\end{equation}

Let us consider system~(\ref{l11}) on the extended set $t_1,t_2 \in [0,\infty)$. Set $t_1=t$. Then~(\ref{l11}) is reduced to the functional equation $\eta(t)+\eta(1-t)=1$, $t\in [0,1]$, which has a general solution
\begin{equation}\label{l13}
\eta(t)=\frac{1}{2}+\Psi_1(t,1-t),
\end{equation}
see~\cite{PM98}, where $\Psi_1(u,v)=-\Psi_1(v,u)$ is any antisymmetric function with two arguments. Clearly, $\eta$ given by~(\ref{l13}) also satisfies~(\ref{l11}) in the case $t_1,t_2 \in (0,\infty)$.

In~(\ref{l12}) set $t_1=s$, $s\in (0,1)$. Then $\frac{1}{t_2}=\frac{1}{1-s}$ and
$$
\frac{1}{\eta\left(\frac{1}{s}\right)}+\frac{1}{\eta\left(\frac{1}{1-s}\right)}=1.
$$
Substitution $\frac{1}{\eta\left(\frac{1}{s}\right)}=F(s)$ gives the equality
$F(s)+F(1-s)=1$, which as above has the general solution
$$
F(s)=\frac{1}{2}+\Psi_2(s,1-s),
$$
where $\Psi_2(u,v)=-\Psi_2(v,u)$ is any antisymmetric function of two variables.
Hence,
$$
\eta\left(\frac{1}{s}\right)=\frac{1}{\frac{1}{2}+\Psi_2(s,1-s)}.
$$
Substitution $t=\frac{1}{s}$, $t\in (1,\infty)$ gives the solution
\begin{equation}\label{l14}
\eta(t)=\frac{1}{\frac{1}{2}+\Psi_2\left(\frac1t,1-\frac{1}{t}\right)}.
\end{equation}

Since $\eta$ is a homeomorphism consider that $\Psi_1$ and $\Psi_2$ are continuous.

From~(\ref{l13}) the condition $\eta(0)=0$ gives $\Psi_1(0,1)=-\frac{1}{2}$ and by antisymmetry $\Psi_1(1,0)=\frac{1}{2}$. Again, from~(\ref{l13}) we have $\eta(1)=\frac{1}{2}+\Psi_1(1,0)=1$. Using this equality and~(\ref{l14}) we impose the condition $\Psi_2(1,0)=\frac{1}{2}$ on the function $\Psi$ and by the symmetry also $\Psi_2(0,1)=-\frac{1}{2}$. Thereby we extent the solution~(\ref{l14}) to the interval $[1,\infty)$.

Let us show that if the functions $\Psi_1, \Psi_2$ are strictly increasing in the first variable then $\eta(t)$ is also strictly increasing. Let $t_1<t_2$, $t_1, t_2\in [0,1]$. Then $-\Psi_1(v, t_1)=\Psi_1(t_1,v)<\Psi_1(t_2,v)=-\Psi_1(v,t_2)$, i.e., $\Psi_1(v,t_2)<\Psi_1(v,t_1)$. Thus, $\Psi_1$ is strictly decreasing in the second variable. Analogously, we have the same for $\Psi_2$.
Since $\Psi_2$ is continuous in two variables, we have
$$
\lim\limits_{t\to \infty}\Psi_2\left(\frac{1}{t}, 1-\frac{1}{t}\right)=-\frac12.
$$
Hence,~(\ref{l13}) and~(\ref{l14}) imply that $\eta(t)$ is strictly increasing on $[0,\infty)$.

Thus, $\eta(t)$ given by~(\ref{k8}), $t\in [0,\infty)$,  satisfies equalities~(\ref{l1}), whenever $t_1+t_2=1$, $t_1,t_2 \in (0,\infty)$ and hence by Lemma~\ref{l02} $f$ preserves metric betweenness.
\end{proof}

\begin{example}
Let $f_1, f_2\colon [0,1]\to \mathbb R^+$ be strictly increasing, continuous, differentiable on $(0,1)$ functions such that $f_{1,2}(1)=1/2$, $f_{1,2}(0)=0$. Consider the functions
$$
\Psi_1(x,y)=f_1(x)-f_1(y),
$$
$$
\Psi_2(x,y)=f_2(x)-f_2(y).
$$
It is clear that $\Psi_1$ and $\Psi_2$ satisfy conditions of Theorem~\ref{t226}. Let us show that $\eta$ is a homeomorphism. It is easy to see that $\eta(0)=0$ and that $\eta$ is continuous. It suffices to show that $\eta$ is strictly increasing. For $t\in (0,1)$ we have
$$
\eta'(t)=f_1'(t)+f_1'(1-t)>0.
$$
If $t\in (1,+\infty)$, then
$$
\eta'(t)=\frac{-((-1/t^2)f_2'(1/t)-(1/t^2)f_2'(1-1/t))}
{(1/2+f_2(1/t)-f_2(1-1/t))^2}
$$
$$
=\frac{(1/t^2)f_2'(1/t)+(1/t^2)f_2'(1-1/t)}
{(1/2+f_2(1/t)-f_2(1-1/t))^2}>0.
$$
Thus, $\eta$ is strictly increasing.

For example, as $f_{1}, f_{2}$ we can take functions $x^{n}/2$, $n \in \mathbb N$, in any combinations.
\end{example}

Recall that a four-point metric space $(X,d)$ is called a \emph{pseudolinear quadruple} if there exists an enumeration $x_1, x_2, x_3, x_4$ of the points of $X$ such that the equalities
\begin{gather}\label{e21}
d(x_1,x_2)=d(x_3,x_4)=t, \, \, d(x_2,x_3)=d(x_4,x_1)=s, \\
d(x_2,x_4)=d(x_3,x_1)=s+t \notag
\end{gather}
hold with some positive reals $s$ and $t$, see Figure~\ref{fig1}. Note also that equilateral pseudolinear quadruples are known by their extremal properties~\cite{DP11}.

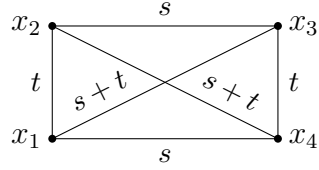
\begin{figure}[ht]
\begin{center}
\begin{tikzpicture}[scale=0.5]
\draw (0,1.5) node [left] {\small{$t$}};
\draw (6,1.5) node [right] {\small{$t$}};
\draw (3,0) node [below] {\small{$s$}};
\draw (3,3) node [above] {\small{$s$}};

\draw (0,0) node [below,left] {$x_1$} --
      (0,3) node [left] {$x_2$} --
      (6,3) node [right] {$x_3$} --
      (6,0) node [right] {$x_4$} --
      (0,0);
 \foreach \i in {(0,0),(0,3),(6,3),(6,0)}
  \fill[black] \i circle (3pt);
\draw (0,0) -- (6,3) node [sloped, near start, above] {\small{$s+t$}};
\draw (0,3) -- (6,0) node [sloped, near end, above] {\small{$s+t$}};
\end{tikzpicture}
\caption{The metric space $(X,d)$.}
\label{fig1}
\end{center}
\end{figure}

In 1928 K. Menger proved the following.
\begin{theorem}[\cite{Me28}]\label{t1}
If every three points of a metric space $X$, $\card (X)\geqslant 3$, are isometrically embeddable in $\RR^1$, then $X$ is isometric to some subset of $\RR^1$ or $X$ is a pseudolinear quadruple.
\end{theorem}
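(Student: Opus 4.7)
The plan is a classical two-step argument. The case $\card(X) = 3$ is immediate, so assume $\card(X) \geq 4$; pick distinct $x_1, x_2, x_3 \in X$, and, using the 3-point hypothesis on $\{x_1, x_2, x_3\}$, relabel so that $x_2$ lies between $x_1$ and $x_3$ in the sense of~(\ref{l0}). Set $t := d(x_1, x_2)$, $s := d(x_2, x_3)$, and think of $x_1, x_2, x_3$ as anchored at $0, t, t+s$ on $\RR^1$.

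The core auxiliary step is a 4-point lemma: for every $y \in X \setminus \{x_1, x_2, x_3\}$, either $\{x_1, x_2, x_3, y\}$ embeds isometrically into $\RR^1$, or it is a pseudolinear quadruple in the sense of~(\ref{e21}) with $y$ playing the role of $x_4$, that is, $d(x_1, y) = s$, $d(x_2, y) = s+t$, $d(x_3, y) = t$. To establish this, apply the 3-point hypothesis to each of $(x_1, x_2, y)$, $(x_2, x_3, y)$, $(x_1, x_3, y)$: each yields one of three betweenness relations, producing $27$ combinations. Encoding them as linear equations in $p = d(x_1, y)$, $q = d(x_2, y)$, $r = d(x_3, y)$ and using $s, t > 0$, a routine but exhaustive case check eliminates all combinations except four collinear ones placing $y$ in one of the intervals $(-\infty, 0]$, $[0, t]$, $[t, t+s]$, $[t+s, \infty)$, together with a single exceptional combination forcing $p = s$, $q = s+t$, $r = t$ --- exactly the pseudolinear pattern. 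This bookkeeping is the main obstacle.

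Given the lemma, the dichotomy in the conclusion splits by whether some $y_0 \in X$ realizes the pseudolinear alternative. If not, each embedding $\{x_1, x_2, x_3, y\} \hookrightarrow \RR^1$ assigns to $y$ a uniquely determined position $\phi(y) \in \RR$; to check that $\phi \colon X \to \RR$ is an isometry, fix $y, z \in X$ and intersect the two candidate values for $d(y, z)$ produced by the 3-point embedding of $(x_1, y, z)$ with those produced by $(x_3, y, z)$. Generically this intersection is $\{|\phi(y) - \phi(z)|\}$; the one degenerate case $\phi(y) + \phi(z) = t+s$, in which the alternative $t+s$ persists, is handled by a third check against $(x_2, y, z)$, which forces $\{y, z\} = \{x_1, x_3\}$ and hence the required equality anyway.

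If instead some $y_0$ yields the pseudolinear alternative, the claim becomes $X = \{x_1, x_2, x_3, y_0\}$. For any hypothetical fifth point $z$ apply the 4-point lemma once more. If $z$ is collinear with $x_1, x_2, x_3$ at some position $z_0$, then the three triples $(x_i, y_0, z)$ are simultaneously compatible only when $z \in \{x_1, x_2, x_3\}$. If instead $\{x_1, x_2, x_3, z\}$ is itself pseudolinear, then $z$ has the same distances to the $x_i$ as $y_0$, so the triples $(x_1, y_0, z)$ and $(x_3, y_0, z)$ force $d(y_0, z) = 2s = 2t$, whence $s = t$; but then the triple $(x_2, y_0, z)$ becomes equilateral of side $2s > 0$, contradicting embeddability in $\RR^1$. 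In either branch we reach a contradiction, completing the proof.
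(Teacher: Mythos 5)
The paper offers no proof of this statement: Theorem~\ref{t1} is quoted from Menger~\cite{Me28} and used as a black box (the only original remark being that the metric axioms are redundant in the hypothesis), so there is no internal argument to compare yours against. Judged on its own, your reconstruction of the classical proof is correct in structure and the key claims do check out. The central 4-point lemma is true: writing $p=d(x_1,y)$, $q=d(x_2,y)$, $r=d(x_3,y)$, the nine sign-pattern combinations coming from the pairs of triples $(x_1,x_2,y)$ and $(x_1,x_3,y)$, refined by $(x_2,x_3,y)$, leave exactly four one-parameter collinear families (positions in $(-\infty,0]$, $[0,t]$, $[t,t+s]$, $[t+s,\infty)$) plus the isolated solution $(p,q,r)=(s,s+t,t)$, which is precisely the pseudolinear pattern of~(\ref{e21}); the combination producing it is $q=t+p$, $|q-r|=s$, $p+r=t+s$. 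Your treatment of the two branches is also sound: in the collinear branch the only obstruction to reading off $d(y,z)=|\phi(y)-\phi(z)|$ from the triples $(x_1,y,z)$ and $(x_3,y,z)$ is the coincidence $\phi(y)+\phi(z)=t+s$, which the triple $(x_2,y,z)$ resolves; in the pseudolinear branch the equilateral triple $(x_2,y_0,z)$ of side $2s$ kills any fifth point. Two caveats on completeness rather than correctness: the isometry verification must also cover positions $\phi(y)$ or $\phi(z)$ outside $[0,t+s]$ (where the degenerate coincidence takes a slightly different form but is eliminated the same way), and the elimination of a collinear fifth point in the pseudolinear branch requires its own small case analysis over the four intervals; both are routine but should be written out if this is to be a proof rather than a sketch.
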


Note that the requirement for $X$ to be a metric space is redundant. The theorem could be formulated for semimetric spaces since the embeddability of every three points into the real line automatically implies that $X$ is a metric space.

The following assertion is formulated for arbitrary mapping $f$. But it holds in particular for an $\eta$-{quasisymmetric} $f$ satisfying conditions of Theorem~\ref{t226} or the first condition of Lemma~\ref{l02}.

\begin{proposition}
Let $(X,d)$, $(Y,\rho)$ be semimetric spaces and let $f\colon X\to Y$ be a mapping preserving metric betweenness. If $A\subset X$  is isometrically embeddable in $\RR^1$, then $f(A)$ is also isometrically embeddable in $\RR^1$. Moreover, if $A\subset X$ is a pseudolinear quadruple, then $f(A)$ is also a pseudolinear quadruple.
\end{proposition}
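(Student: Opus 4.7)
The strategy is to reduce both claims to Menger's Theorem~\ref{t1} via a ``middle count'' invariant. Concretely, I would first show that every three-point subset of $f(A)$ is isometrically embeddable in $\RR^1$; Menger's theorem then classifies $f(A)$ as either a subset of $\RR^1$ or a pseudolinear quadruple, and it remains to pick the correct alternative for each of the two claims.

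For the triples: given distinct points $y_1,y_2,y_3\in f(A)$, pick preimages $a_i\in A$ with $f(a_i)=y_i$, automatically distinct. Under either hypothesis --- $A$ embeds in $\RR^1$, or $A$ is a pseudolinear quadruple --- every three-point subset of $A$ exhibits a metric betweenness (from the order on $\RR^1$ in the first case, from the defining distances~(\ref{e21}) in the second). Hence some $a_j$ lies between the other two, and because $f$ preserves metric betweenness, $y_j$ lies between $y_i$ and $y_k$ in $Y$; so $\{y_1,y_2,y_3\}$ embeds isometrically in $\RR^1$. Menger's theorem then gives the dichotomy for $f(A)$ when $\card f(A)\geqslant 3$ (and $\card f(A)\leqslant 2$ is trivially $\RR^1$-embeddable).

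The middle-count invariant distinguishes the two Menger alternatives: a direct check from~(\ref{e21}) shows that in a pseudolinear quadruple each of the four points is the metric middle of exactly one triple, whereas four distinct reals $a_1<a_2<a_3<a_4$ have each endpoint as middle of no triple and each interior point as middle of two triples. For the first claim, assuming $A$ is $\RR^1$-embeddable and $f(A)$ were a PQ, pick four preimages of the four PQ-points and order them collinearly in $A$; the two interior preimages are middles of two triples apiece, so by preservation their images must be middles of at least two triples of $f(A)$, contradicting the PQ count of one. For the second claim, the four PQ-betweenness relations of $A$ (namely $y_2,y_4$ between $y_1,y_3$ and $y_1,y_3$ between $y_2,y_4$) pass through $f$ and already force the four images $f(y_1),\dots,f(y_4)$ to be pairwise distinct, so $\card f(A)=4$; they also make each $f(y_i)$ a middle in $f(A)$. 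If $f(A)$ were a four-point subset of $\RR^1$, only two of its points could ever be middles, so $f(A)$ must be a PQ.

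\textbf{Main obstacle.} The delicate step is the distinctness bookkeeping, since $f$ is not assumed injective: for the PQ claim one must notice that every pair $\{y_i,y_j\}$ of the four PQ-points arises as an ``endpoint pair'' of at least one of the preserved betweenness relations, which forces $f(y_i)\neq f(y_j)$ in $Y$. Once the four images are known to be distinct, the middle-count contradictions land cleanly and Menger's dichotomy selects the desired alternative in each case.
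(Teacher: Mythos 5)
Your proof is correct in substance but takes a genuinely different route from the paper's. Both arguments lean on Menger's Theorem~\ref{t1}, but where you distinguish the two Menger alternatives by a \emph{middle count} (in a pseudolinear quadruple each point is the metric middle of exactly one triple, while in a four-point linear set the two interior points are middles of two triples each and the endpoints of none), the paper counts \emph{diametrical pairs} instead: a linear four-point set has exactly one diametrical pair while a pseudolinear quadruple has two, which settles the first claim; and for the second claim the paper bypasses Menger entirely, writing the image distances as $a,b,c,d$ with $b+c=a+d$ and $a+b=c+d$ and solving to get $c=a$, $d=b$, which exhibits $f(A)$ directly as a pseudolinear quadruple with explicit parameters. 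Your counting argument is somewhat longer but arguably more robust, since it never needs to identify which pair of the image is diametrical; the paper's computation for the second claim is shorter and produces the $s$, $t$ of $f(A)$ explicitly.

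One inaccuracy to repair in the distinctness step you yourself flag as delicate: it is not true that every pair of the four points of a pseudolinear quadruple occurs as an endpoint pair of a betweenness relation --- only the two diagonals $\{x_1,x_3\}$ and $\{x_2,x_4\}$ do, and the four ``sides'' never appear as endpoint pairs. The pairwise distinctness of the four images still holds, but for a different reason: each of the six pairs is contained in at least one triple of $A$ that carries a betweenness relation, and since the paper defines ``$y$ lies between $x$ and $z$'' only for three \emph{different} points, a preserved betweenness relation forces all three image points to be distinct. (Equation~(\ref{l0}) alone would not force this --- if $f(x)=f(y)$ it is satisfied trivially --- so the distinctness really comes from the definition of the relation; the paper's own proof uses this implicitly when it declares $a,b,c,d$ positive.) With that one-line repair your argument goes through.
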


\begin{proof}
For $\card (A) \neq 4$ the first assertion follows directly from Theorem~\ref{t1}. If $\card (A)=4$ and $A$ is isometrically embeddable in $\RR^1$, then $A$ has only one diametrical pair of points $\{a,b\}$. Clearly, $\{f(a),f(b)\}$ is a single diametrical pair in $f(A)$. Since every pseudolinear quadruple has two diametrical pairs of points, by Theorem~\ref{t1} we have that $f(A)$ is isometric to some subset of $\RR^1$

Let $A=\{x_1, x_2, x_3, x_4\}$ and let condition~(\ref{e21}) hold. Since $f$ preserves metric betweenness, we have
\begin{gather*}
\rho(f(x_1),f(x_2))=a, \, \, \rho(f(x_2),f(x_3))=b, \, \, \rho(f(x_3),f(x_4))=c, \, \, \rho(f(x_4),f(x_1))=d, \\
\rho(f(x_2),f(x_4))=b+c=a+d, \, \, \rho(f(x_1),f(x_3))=a+b=c+d
\end{gather*}
with some positive reals $a,b,c,d$.

The system of equalities
\begin{equation*}
\begin{cases}
b+c=a+d,\\
a+b=c+d,
\end{cases}
\end{equation*}
implies that $d=b$ and $c=a$. Hence, $\rho(f(x_2),f(x_4))= \rho(f(x_1),f(x_3))=a+b$ and $\{f(x_1), f(x_2), f(x_3), f(x_4)\}$ is a pseudolinear quadruple in $(Y,\rho)$.
\end{proof}

\section{A generalization of the Tukia-V\"{a}is\"{a}l\"{a} inequality}\label{s4}

The following proposition was proved in~\cite{TV80} by P. Tukia and J. V\"{a}is\"{a}l\"{a}, see also Propositon 10.8 in~\cite{H01} for the extended proof.

\begin{proposition}\label{p1.2}
Let $X, Y$ be metric spaces and let $f$ be an $\eta$-quasi\-sym\-metric mapping. Let $A\subset B\subset X$ with $\diam A>0$, $\diam B<\infty$. Then $\diam f(B)<\infty$ and
\begin{equation}\label{e11}
  \frac{1}{2\eta\left(\frac{\diam B}{\diam A}\right)}\leqslant
  \frac{\diam f(A)}{\diam f(B)}\leqslant
  \eta\left( \frac{2\diam A}{\diam B}\right).
\end{equation}
\end{proposition}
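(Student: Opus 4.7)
The plan is to prove both estimates by exploiting the quasisymmetry inequality with carefully chosen reference points; the upper bound uses a point of $B$ as reference (relative to a pair in $A$), and the lower bound uses a point of $A$ as reference (relative to a pair in $B$). I will also need to verify $\diam f(B)<\infty$ first so the ratio on the left of~(\ref{e11}) is well defined.

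For finiteness, fix any two points $a_1,a_2\in A$ with $d(a_1,a_2)>0$ (possible since $\diam A>0$). For arbitrary $b_1,b_2\in B$, writing $d(a_1,b_i)\leqslant \diam B$ and applying~(\ref{e0}) with $t_i=d(a_1,b_i)/d(a_1,a_2)$ gives $\rho(f(a_1),f(b_i))\leqslant \eta(\diam B/d(a_1,a_2))\,\rho(f(a_1),f(a_2))$, and the triangle inequality in $Y$ then bounds $\rho(f(b_1),f(b_2))$ by twice this quantity, so $\diam f(B)<\infty$. For the upper bound in~(\ref{e11}), given $a_1,a_2\in A$ and $\varepsilon>0$, choose $b_1,b_2\in B$ with $d(b_1,b_2)>\diam B-\varepsilon$; by the triangle inequality in $X$ at least one of $d(a_1,b_1),d(a_1,b_2)$ is $\geqslant (\diam B-\varepsilon)/2$, say $d(a_1,b_1)$. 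Then $d(a_1,a_2)\leqslant t\,d(a_1,b_1)$ with $t\leqslant 2\diam A/(\diam B-\varepsilon)$, so~(\ref{e0}) yields $\rho(f(a_1),f(a_2))\leqslant \eta\bigl(2\diam A/(\diam B-\varepsilon)\bigr)\,\diam f(B)$. Letting $\varepsilon\to 0$ and using continuity and monotonicity of $\eta$, then taking the supremum over $a_1,a_2$, gives the right-hand inequality.

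For the lower bound, proceed symmetrically. Given $\varepsilon>0$ pick $a_1,a_2\in A$ with $d(a_1,a_2)>\diam A-\varepsilon$, and take arbitrary $b_1,b_2\in B$. From $d(a_1,b_i)\leqslant \diam B$ we obtain $d(a_1,b_i)\leqslant t_i\,d(a_1,a_2)$ with $t_i\leqslant \diam B/(\diam A-\varepsilon)$, so~(\ref{e0}) gives $\rho(f(a_1),f(b_i))\leqslant \eta\bigl(\diam B/(\diam A-\varepsilon)\bigr)\,\diam f(A)$ for $i=1,2$; combining these with the triangle inequality in $Y$ and letting $\varepsilon\to 0$ yields $\diam f(B)\leqslant 2\eta(\diam B/\diam A)\,\diam f(A)$, which rearranges to the left-hand inequality.

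The main subtlety is not a deep obstruction but a bookkeeping one: the diameters may not be attained, so I have to absorb an $\varepsilon$ in the argument of $\eta$ and pass to the limit using that $\eta$, being a homeomorphism of $[0,\infty)$ with $\eta(0)=0$, is continuous and strictly increasing. The factor $2$ in both bounds is an unavoidable price of this triangle-inequality trick — it is what ultimately allows a single pair $(a_1,a_2)$ (respectively $(b_1,b_2)$) to serve as reference for an arbitrary target pair without losing a factor that depends on the geometry of $B$ (respectively $A$).
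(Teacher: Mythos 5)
Your proof is correct. Note, however, that the paper does not prove Proposition~\ref{p1.2} directly: it quotes it from Tukia--V\"ais\"al\"a and later recovers it as the case $K_1=K_2=1$ of Corollary~\ref{c4.3}, which in turn specializes the semimetric generalization Theorem~\ref{p2}. Your argument is the classical direct one, and its ingredients --- approximating $\diam B$ (respectively $\diam A$) by a pair of points, applying implication~(\ref{e0}) with a suitably chosen $t$, and paying a factor $2$ through the triangle inequality --- are exactly the ingredients of the proof of Theorem~\ref{p2}, where the triangle inequality is replaced by the triangle functions and the factor $2$ is absorbed into $\varphi_i^{-1}$. The one genuine structural difference is the lower bound: you prove it by a symmetric direct argument (reference pair in $A$, arbitrary target pair in $B$), whereas the paper obtains it by applying the upper bound to the inverse map, using Proposition~\ref{p1} that $f^{-1}$ is $\eta'$-quasisymmetric with $\eta'(t)=1/\eta^{-1}(1/t)$. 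Your route is more elementary and self-contained for the metric case; the paper's route buys the generality of arbitrary triangle functions at the cost of invoking the inverse-map lemma. Two minor points you could make explicit: the ratio in~(\ref{e11}) presupposes $\diam f(B)>0$, which is guaranteed in the Tukia--V\"ais\"al\"a setting because a nonconstant quasisymmetric map is injective; and in the limiting steps one should take $\varepsilon<\diam A$ (respectively $\varepsilon<\diam B$) so that the arguments of $\eta$ stay positive, after which continuity and monotonicity of $\eta$ justify passing to the limit exactly as you say.
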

In Theorem~\ref{p2}  we prove a generalization of this proposition  in the case when $X$ and $Y$ are semimetric spaces with different triangle functions.

The following assertion was formulated in~\cite{TV80} for the case when $X$ and $Y$ are metric spaces but we prove it for the case when $X$ and $Y$ are semimetric spaces.
\begin{proposition}\label{p1}
Let $(X,d)$ and $(Y,\rho)$ be semimetric spaces.
If $f\colon X\to Y$ is an $\eta$-quasisymmetric mapping, then $f^{-1}\colon f(X)\to X$ is an $\eta'$-quasisymmetric mapping, where
\begin{equation}\label{e81}
\eta'(t) = 1 / \eta^{-1}(t^{-1})
\end{equation}
 for $t>0$.
\end{proposition}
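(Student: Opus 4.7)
The plan is to first reduce to the case in which $f$ is injective, then to check that the proposed $\eta'$ is a self-homeomorphism of $[0,\infty)$, and finally to verify the quasisymmetry implication by contradiction using the $\eta$-quasisymmetry of $f$ with the roles of the two ``neighbors'' interchanged.

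If $f$ is constant then $f(X)$ is a single point and the claim is vacuous; otherwise the short argument recorded in the proof of Proposition~\ref{p21} (which uses only the defining implication~(\ref{e0}) and not continuity of the semimetrics) shows that $f$ is injective, so $f^{-1}\colon f(X)\to X$ is well defined. For the homeomorphism check, $\eta^{-1}\colon[0,\infty)\to[0,\infty)$ is itself a strictly increasing self-homeomorphism fixing $0$. Setting $\eta'(0):=0$, the formula $\eta'(t)=1/\eta^{-1}(1/t)$ is continuous and strictly increasing on $(0,\infty)$, tends to $0$ as $t\to 0^+$ (since $\eta^{-1}(1/t)\to\infty$) and to $\infty$ as $t\to\infty$, so it extends to a self-homeomorphism of $[0,\infty)$.

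For the main implication, fix $t>0$ and points $x',a',b'\in f(X)$ with $\rho(x',a')\leqslant t\rho(x',b')$, and write $x=f^{-1}(x')$, $a=f^{-1}(a')$, $b=f^{-1}(b')$. Assume toward contradiction that $d(x,a)>\eta'(t)d(x,b)$. If $d(x,b)=0$, then $b=x$, whence $\rho(f(x),f(b))=0$ forces $\rho(f(x),f(a))=0$ and injectivity gives $a=x$, contradicting $d(x,a)>0$. Otherwise set $s:=d(x,b)/d(x,a)$; the standing assumption rearranges to $s<1/\eta'(t)=\eta^{-1}(1/t)$, and monotonicity of $\eta$ yields $\eta(s)<1/t$. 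Applying the $\eta$-quasisymmetry of $f$ to the (equality) ratio $d(x,b)\leqslant s\,d(x,a)$ gives
$$
\rho(f(x),f(b))\leqslant \eta(s)\rho(f(x),f(a))<\frac{1}{t}\rho(f(x),f(a)),
$$
i.e.\ $\rho(f(x),f(a))>t\rho(f(x),f(b))$, in direct contradiction to the hypothesis.

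There is no real obstacle in this proof; the only points requiring attention are the degenerate case $d(x,b)=0$, which is dispatched by injectivity, and the correct bookkeeping of reciprocals when passing through $\eta^{-1}$ in order to convert the hypothesized $\rho$-inequality into the desired $d$-inequality.
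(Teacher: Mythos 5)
Your proof is correct and follows essentially the same route as the paper's: a proof by contradiction that rearranges $d(x,a)>\eta'(t)d(x,b)$ into $d(x,b)<\eta^{-1}(1/t)\,d(x,a)$ and then applies the defining implication~(\ref{e0}) to contradict $\rho(x',a')\leqslant t\rho(x',b')$. You additionally spell out details the paper leaves implicit (injectivity of $f$ so that $f^{-1}$ is well defined, the case $d(x,b)=0$, and the verification that $\eta'$ is a homeomorphism), which is careful but not a different method.
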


\begin{proof}
Let $a_1, b_1,x_1 \in f(X)$ and let $a=f^{-1}(a_1)$, $b=f^{-1}(b_1)$, and $x=f^{-1}(x_1)$. Let us prove the proposition by contradiction. Assume that
$$
\rho(x_1,a_1)\leqslant t\rho(x_1,b_1) \, \text{ but } \,  d(x,a)> \eta'(t)d(x,b).
$$
Then by~(\ref{e81}) we have
$d(x,b)< \eta^{-1}(\frac{1}{t})\rho(x,a)$. Using~(\ref{e0}) we get $\rho(x_1,b_1)< \frac{1}{t}\rho(x_1,a_1)$, which contradicts our assumption.
\end{proof}

\begin{theorem}\label{p2}
Let $(X,d)$ and $(Y,\rho)$ be semimetric spaces with continuous and strictly increasing in both of their arguments triangle functions $\Phi_1$ and $\Phi_2$, respectively. And let $f\colon X\to Y$ be an $\eta$-quasisymmetric mapping. Then $f$ maps bounded subspaces to bounded subspaces.

Moreover, if $A\subseteq B\subseteq X$, $0< \diam A,  \diam B <\infty$,  then $\diam f(B)$ is finite and
\begin{equation}\label{e12}
  \frac{\diam f(A)}{\diam f(B)}\leqslant
  \eta\left( \frac{\diam A}{\varphi_1^{-1}(\diam B)}\right),
\end{equation}

\begin{equation}\label{e122}
\frac{1}{ \eta\left( \frac{\diam B}{\diam A}\right)}
\leqslant
\frac{\diam f(A)}{\varphi_2^{-1}(\diam f(B))},
\end{equation}
where $\varphi_1(t)=\Phi_1(t,t)$, $\varphi_2(t)=\Phi_2(t,t)$.
\end{theorem}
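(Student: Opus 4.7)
The plan is to adapt the classical Tukia--V\"ais\"al\"a argument (Proposition~\ref{p1.2}) with one key substitution: where the metric proof uses that every point of a bounded set $B$ has ``radius $\geqslant \tfrac12 \diam B$'', I will use the semimetric analogue
$$
\sup_{u\in B} d(a,u)\;\geqslant\;\varphi_1^{-1}(\diam B),\qquad a\in B.
$$
This estimate follows from the $\Phi_1$-triangle inequality, since for any $u,v\in B$
$$
d(u,v)\leqslant \Phi_1(d(u,a),d(a,v))\leqslant \varphi_1\Bigl(\sup_{w\in B} d(a,w)\Bigr),
$$
and $\varphi_1$ admits a continuous inverse on its range, being continuous and strictly increasing (inherited from $\Phi_1$). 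The analogous inequality with $\varphi_2$ holds in $(Y,\rho)$. I may assume $f$ is non-constant (otherwise all estimates are trivial); by the argument opening the proof of Proposition~\ref{p21}, $f$ is then injective.

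For the boundedness claim, let $B\subseteq X$ be bounded. If $\diam B=0$ there is nothing to prove; otherwise fix $b_0,b_1\in B$ with $d(b_0,b_1)>0$. Setting $t=\diam B/d(b_0,b_1)$ and applying~(\ref{e0}) to each triple $(b_0,b,b_1)$, $b\in B$, gives $\rho(f(b_0),f(b))\leqslant \eta(t)\rho(f(b_0),f(b_1))=:R<\infty$, whence by the $\Phi_2$-triangle inequality $\diam f(B)\leqslant \varphi_2(R)<\infty$. For the upper bound~(\ref{e12}) I fix $\ve>0$ and, for each $a_2\in A\subseteq B$, pick $b\in B$ with $d(a_2,b)>\varphi_1^{-1}(\diam B)-\ve$ (the displayed estimate). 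Then for every $a_3\in A$,
$$
\frac{d(a_2,a_3)}{d(a_2,b)}\leqslant \frac{\diam A}{\varphi_1^{-1}(\diam B)-\ve},
$$
so $\eta$-quasisymmetry and the monotonicity of $\eta$ yield
$$
\rho(f(a_2),f(a_3))\leqslant \eta\Bigl(\tfrac{\diam A}{\varphi_1^{-1}(\diam B)-\ve}\Bigr)\rho(f(a_2),f(b))\leqslant \eta\Bigl(\tfrac{\diam A}{\varphi_1^{-1}(\diam B)-\ve}\Bigr)\diam f(B).
$$
Taking $\sup_{a_2,a_3\in A}$ and letting $\ve\to 0$ (using continuity of $\eta$ and $\varphi_1^{-1}$) yields~(\ref{e12}).

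The lower bound~(\ref{e122}) follows by applying~(\ref{e12}) to the inverse $f^{-1}$, which by Proposition~\ref{p1} is $\eta'$-quasisymmetric from $(f(X),\rho)$ (triangle function $\Phi_2$) to $(X,d)$ (triangle function $\Phi_1$) with $\eta'(t)=1/\eta^{-1}(1/t)$. Since $f$ is injective and non-constant, $0<\diam f(A)\leqslant \diam f(B)<\infty$, so the hypotheses of the first part are satisfied with $\Phi_2$ in the source role, giving
$$
\frac{\diam A}{\diam B}\leqslant \eta'\Bigl(\frac{\diam f(A)}{\varphi_2^{-1}(\diam f(B))}\Bigr).
$$
Unwinding the definition of $\eta'$ (apply $\eta^{-1}$ to both sides and take reciprocals) produces~(\ref{e122}). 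The main obstacle is simply identifying $\varphi_1^{-1}(\diam B)$, together with its image-side counterpart $\varphi_2^{-1}(\diam f(B))$, as the correct semimetric replacement for ``radius $\geqslant\tfrac12\diam$''; once this is pinned down, the classical Tukia--V\"ais\"al\"a reasoning transcribes line by line.
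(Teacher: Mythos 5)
Your proposal is correct and follows essentially the same route as the paper: both proofs hinge on the lower bound $d(a,b)\geqslant\varphi_1^{-1}(\diam B)$ for a suitable near-diametrical point $b$ (the paper realizes it via sequences $(b_n),(b'_n)$ with $d(b_n,b'_n)\to\diam B$, you via a supremum-plus-$\varepsilon$ argument, which is the same device), and both obtain~(\ref{e122}) by applying~(\ref{e12}) to $f^{-1}$ through Proposition~\ref{p1}. Your explicit remarks on injectivity of a non-constant $f$ and on $\varphi_1^{-1}(\diam B)$ being well defined are welcome details the paper leaves implicit.
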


\begin{proof}
Let $(b_n)$ and $(b'_n)$ be sequences such that ${1}/{2}\diam B \leqslant d(b_n,b'_n)$ for all $n$ and
$$
d(b_n,b'_n)\to \diam B, \text{ as } n \to \infty.
$$
For every $x\in B$ we have
$$
d(x,b_1)\leqslant \diam B \leqslant 2d(b_1,b'_1)
$$
by~(\ref{e0}) implying
$$
\rho(f(x),f(b_1))\leqslant \eta(2)\rho(f(b_1),f(b'_1)).
$$
In order to see that $\diam f(B)<\infty$ for any $x,y \in B$ consider the inequalities
$$
\rho(f(x),f(y)) \leqslant \Phi_2(\rho(f(x),f(b_1)), \rho(f(y),f(b_1)))
$$
$$
\leqslant  \Phi_2(\eta(2)\rho(f(b_1),f(b'_1)), \eta(2)\rho(f(b_1),f(b'_1))).
$$

Let $x, a\in A$. To prove inequality~(\ref{e12})
consider the evident inequality
$$
d(a,x)\leqslant \frac{d(x,a)}{d(b_n,a)}d(a,b_n),
$$
which by~(\ref{e0}) implies
\begin{equation}\label{e13}
\rho(f(x),f(a))\leqslant \eta \left( \frac{d(x,a)}{d(b_n,a)} \right)\rho(f(b_n),f(a)).
\end{equation}

Without loss of generality (if needed swapping $b_n$ and $b'_n$) we may assume that
$$
d(b'_n,a)\leqslant d(b_n,a).
$$
Using the triangle inequality
$$
d(b_n, b'_n)\leqslant \Phi_1(d(b_n, a), d(a, b'_n))
$$
and the monotonicity of $\Phi_1$, we get
$$
d(b_n, b'_n)\leqslant \varphi_1(d(b_n, a)),
$$
where $\varphi_1(t)=\Phi_1(t,t)$.
Hence,
$$
\varphi_1^{-1}(d(b_n, b'_n))\leqslant d(b_n, a).
$$

Using the last inequality and the relations $d(x,a)\leqslant \diam A$, $A\subseteq B$, from~(\ref{e13}) we have
$$
\rho(f(x),f(a))\leqslant \eta \left( \frac{\diam A}{\varphi_1^{-1}(d(b_n, b'_n))} \right)\diam f(B).
$$

Since $ d(b_n,b'_n)\to \diam B$ we have inequality~(\ref{e12}).

By Proposition~\ref{p1} $f^{-1}\colon f(X) \to X$ is an $\eta'$-quasisymmetric mapping.
Since $f(A)\subseteq f(B)\subseteq f(X)$, $0<\diam f(A), \diam f(B)< \infty$, applying inequality~(\ref{e12}) to $f^{-1}$  we have
$$
\frac{\diam A}{\diam B}\leqslant \eta'\left( \frac{\diam f(A)}{\varphi_2^{-1}(\diam f(B))}\right).
$$
From~(\ref{e81}) we have
$$
\frac{\diam A}{\diam B}\leqslant \left(\eta^{-1}\left( \frac{\varphi_2^{-1}(\diam f(B))}{\diam f(A)}\right)\right)^{-1}.
$$
Hence,
$$
\eta^{-1}\left( \frac{\varphi_2^{-1}(\diam f(B))}{\diam f(A)}\right) \leqslant \frac{\diam B}{\diam A}
$$
and, since $\eta$ is strictly increasing, we have
$$
\frac{\varphi_2^{-1}(\diam f(B))}{\diam f(A)} \leqslant \eta\left( \frac{\diam B}{\diam A}\right),
$$
which implies inequality~(\ref{e122}). This completes the proof.
\end{proof}

\begin{corollary}\label{c4.3}
Let $X$ and $Y$ be $b$-metric spaces with the coefficients $K_1$ and $K_2$, respectively. Then the double inequality holds
\begin{equation}\label{e4}
  \frac{1}{2K_2\eta\left(\frac{\diam B}{\diam A}\right)}\leqslant
  \frac{\diam f(A)}{\diam f(B)}\leqslant
  \eta\left( \frac{2K_1\diam A}{\diam B}\right).
\end{equation}
\end{corollary}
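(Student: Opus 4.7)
The plan is to derive Corollary~\ref{c4.3} directly from Theorem~\ref{p2} by specializing to the $b$-metric setting. In a $b$-metric space with coefficient $K$, the natural triangle function is $\Phi(u,v) = K(u+v)$, which is continuous on $\RR^+\times\RR^+$ and strictly increasing in each of its arguments, so the hypotheses of Theorem~\ref{p2} are met for $(X,d)$ with $\Phi_1(u,v)=K_1(u+v)$ and for $(Y,\rho)$ with $\Phi_2(u,v)=K_2(u+v)$.

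Next I would compute the auxiliary functions $\varphi_1$ and $\varphi_2$ that appear in~(\ref{e12}) and~(\ref{e122}). Since $\varphi_i(t) = \Phi_i(t,t) = 2K_i t$, their inverses are simply $\varphi_i^{-1}(s) = s/(2K_i)$ for $i=1,2$. Plugging $\varphi_1^{-1}(\diam B) = \diam B/(2K_1)$ into~(\ref{e12}) gives at once the right-hand inequality
$$
\frac{\diam f(A)}{\diam f(B)} \leqslant \eta\!\left(\frac{2K_1 \diam A}{\diam B}\right).
$$
Similarly, substituting $\varphi_2^{-1}(\diam f(B)) = \diam f(B)/(2K_2)$ into~(\ref{e122}) and rearranging yields
$$
\frac{1}{2K_2\,\eta\!\left(\frac{\diam B}{\diam A}\right)} \leqslant \frac{\diam f(A)}{\diam f(B)},
$$
which is the left-hand inequality of~(\ref{e4}).

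There is no real obstacle here; the work has already been done in Theorem~\ref{p2}, and the only task is to verify that the $b$-metric triangle functions satisfy the regularity hypotheses (continuity and strict monotonicity in both arguments) and to carry out the elementary substitutions for $\varphi_1^{-1}$ and $\varphi_2^{-1}$. I would conclude the proof by noting that the boundedness of $\diam f(B)$ is supplied by the first part of Theorem~\ref{p2}, so the quotient $\diam f(A)/\diam f(B)$ on the left of~(\ref{e4}) is well-defined whenever $0 < \diam f(B) < \infty$, which follows from $f$ being nonconstant on $B$ and $A \subseteq B$ being bounded.
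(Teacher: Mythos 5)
Your proposal is correct and follows essentially the same route as the paper: the authors likewise invoke Theorem~\ref{p2} with $\Phi_1(x,y)=K_1(x+y)$, $\Phi_2(x,y)=K_2(x+y)$, hence $\varphi_i(t)=2K_it$ and $\varphi_i^{-1}(s)=s/(2K_i)$, and substitute into~(\ref{e12}) and~(\ref{e122}) to obtain~(\ref{e4}). Your additional remarks verifying the continuity and strict monotonicity hypotheses and the finiteness of $\diam f(B)$ are consistent with, and slightly more explicit than, the paper's proof.
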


\begin{proof}
By Remark~\ref{r23} we have  $\Phi_1(x,y)=K_1(x+y)$ and $\Phi_2(x,y)=K_2(x+y)$. Using Theorem~\ref{p2} with $\varphi_1(x)=2K_1x$, $\varphi_1^{-1}(x)=x/(2K_1)$ and  $\varphi_2(x)=2K_2x$, $\varphi_2^{-1}(x)=x/(2K_2)$ we obtain double inequality~(\ref{e4}).
\end{proof}

\begin{corollary}
Let $X$ and $Y$ be $b$-metric spaces with the coefficients $K_1$ and $K_2$, respectively and let $f\colon X\to Y$ be an $\eta$-quasisymmetric mapping. Then
\begin{equation*}
\eta(2K_1r)2K_2\eta \left( \frac{1}{r} \right)\geqslant 1,
\end{equation*}
where $r=\diam A / \diam B$, $A\subseteq B\subseteq X$, $0< \diam A,  \diam B <\infty$.
\end{corollary}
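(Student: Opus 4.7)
The plan is to apply Corollary~\ref{c4.3} directly. The double inequality~(\ref{e4}) has the form
\[
\frac{1}{2K_2\,\eta(\diam B/\diam A)} \leqslant \frac{\diam f(A)}{\diam f(B)} \leqslant \eta\!\left(\frac{2K_1 \diam A}{\diam B}\right),
\]
so the ratio $\diam f(A)/\diam f(B)$ is squeezed between two quantities that depend only on $r = \diam A/\diam B$ and on $K_1,K_2$. Writing $1/r = \diam B/\diam A$, the lower bound becomes $1/(2K_2\eta(1/r))$ and the upper bound becomes $\eta(2K_1 r)$.

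First I would invoke Corollary~\ref{c4.3} with the given $A \subseteq B \subseteq X$, which is legitimate since $0 < \diam A,\diam B < \infty$. Then I would chain the lower and upper bounds: the left-hand side of~(\ref{e4}) is at most the right-hand side, yielding
\[
\frac{1}{2K_2\,\eta(1/r)} \leqslant \eta(2K_1 r).
\]
Rearranging (noting $\eta$ maps into $[0,\infty)$ and $2K_2\eta(1/r) > 0$ whenever $r > 0$) gives exactly the required inequality
\[
\eta(2K_1 r)\, 2K_2\,\eta(1/r) \geqslant 1.
\]

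There is no real obstacle here — the corollary is essentially a bookkeeping consequence of the already-proved double inequality, so the only point worth verifying explicitly is that $\eta(1/r) > 0$ so that the rearrangement is legitimate; this follows from $r > 0$ (since $\diam A > 0$) together with the fact that $\eta\colon [0,\infty)\to[0,\infty)$ is a homeomorphism with $\eta(0)=0$ and $\eta$ strictly increasing, hence $\eta(1/r)>0$.
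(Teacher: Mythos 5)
Your proof is correct and is exactly the intended argument: the paper states this corollary without proof precisely because it follows by chaining the two sides of the double inequality~(\ref{e4}) from Corollary~\ref{c4.3}, which is what you do. Your extra remark that $\eta(1/r)>0$ (since $r>0$ and $\eta$ is a strictly increasing homeomorphism vanishing only at $0$) is a sensible check and does not change the route.
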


Finally, we get Proposition~\ref{p1.2} as a following corollary.
\begin{corollary}
If $X$ and $Y$ are metric spaces, then double inequality~(\ref{e4}) holds with $K_1=K_2=1$.
\end{corollary}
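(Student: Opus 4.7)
The plan is almost trivial at this point: recognize that a metric space is precisely a $b$-metric space with coefficient $1$, and then specialize Corollary~\ref{c4.3}. So the first step is to note that if $(X,d)$ is a metric space, then $d$ satisfies $d(x,y)\leqslant d(x,z)+d(z,y)$, which is the $b$-metric inequality with $K=1$; equivalently, the triangle function is $\Phi(u,v)=u+v$, the $b$-metric triangle function corresponding to $K=1$. The same applies to $(Y,\rho)$.

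Second, I would invoke Corollary~\ref{c4.3} with $K_1=K_2=1$. Substituting these values into
\[
\frac{1}{2K_2\eta\left(\frac{\diam B}{\diam A}\right)}\leqslant \frac{\diam f(A)}{\diam f(B)}\leqslant \eta\left(\frac{2K_1\diam A}{\diam B}\right)
\]
yields exactly the Tukia--Väisälä double inequality~(\ref{e4}) with the constants $1$ in both bounds, which is the claim to be established.

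There is no genuine obstacle here; the only thing worth checking is that the hypotheses of Theorem~\ref{p2} (and hence of its Corollary~\ref{c4.3}) are met by the metric-space triangle function $\Phi(u,v)=u+v$: it is continuous and strictly increasing in each argument, so the functions $\varphi_i(t)=\Phi_i(t,t)=2t$ are homeomorphisms with $\varphi_i^{-1}(s)=s/2$, exactly as used in the derivation of Corollary~\ref{c4.3}. Thus the corollary applies verbatim, and setting $K_1=K_2=1$ gives the desired statement, recovering Proposition~\ref{p1.2} as a special case of the general bound proved in Theorem~\ref{p2}.
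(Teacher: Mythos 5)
Your proof is correct and follows exactly the paper's route: the paper likewise observes that a metric space is a $b$-metric space with coefficient $1$ and applies Corollary~\ref{c4.3} with $K_1=K_2=1$. Your extra check that $\Phi(u,v)=u+v$ meets the hypotheses of Theorem~\ref{p2} is a harmless addition already implicit in the derivation of Corollary~\ref{c4.3}.
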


\begin{proof}
It suffices to apply Corollary~\ref{c4.3} with $K_1=K_2=1$, since the $b$-metric spaces $X$ and $Y$ with the coefficients $K_1$ and $K_2$ are metric spaces if and only if $K_1=K_2=1$.
\end{proof}

\begin{corollary}\label{c4.6}
If $X$ and $Y$ are ultrametric spaces, then double inequality~(\ref{e4}) holds with $K_1=K_2=\frac{1}{2}$.
\end{corollary}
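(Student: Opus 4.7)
The plan is to apply Theorem~\ref{p2} directly to the ultrametric setting, taking $\Phi_1(u,v)=\Phi_2(u,v)=\max\{u,v\}$ as the triangle functions on $X$ and $Y$, respectively. First I would check that the auxiliary functions appearing in the theorem reduce to the identity: $\varphi_1(t)=\varphi_2(t)=\max\{t,t\}=t$, and hence $\varphi_1^{-1}(t)=\varphi_2^{-1}(t)=t$. One small caveat to flag is that $\max$ is monotone but not strictly monotone in each argument separately; however, a quick inspection of the proof of Theorem~\ref{p2} shows that strict monotonicity is only needed to guarantee that the derived functions $\varphi_i$ are invertible, which holds here since they are the identity.

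Next, I would substitute $\varphi_1^{-1}(t)=\varphi_2^{-1}(t)=t$ into the two conclusions~(\ref{e12}) and~(\ref{e122}) of Theorem~\ref{p2}. This immediately yields
$$
\frac{1}{\eta\!\left(\dfrac{\diam B}{\diam A}\right)}\ \leqslant\ \frac{\diam f(A)}{\diam f(B)}\ \leqslant\ \eta\!\left(\frac{\diam A}{\diam B}\right).
$$
Finally, I would compare this with~(\ref{e4}) under the specialization $K_1=K_2=\tfrac12$: the upper bound in~(\ref{e4}) becomes $\eta\bigl(\tfrac{2K_1\diam A}{\diam B}\bigr)=\eta\bigl(\tfrac{\diam A}{\diam B}\bigr)$ and the lower bound becomes $\bigl(2K_2\,\eta(\diam B/\diam A)\bigr)^{-1}=\bigl(\eta(\diam B/\diam A)\bigr)^{-1}$, which matches exactly what was just derived.

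There is no real obstacle here; the proof is a one-line corollary of Theorem~\ref{p2}. The only subtle point, worth mentioning explicitly, is why the corollary is strictly sharper than what Corollary~\ref{c4.3} would give by viewing an ultrametric as a $b$-metric with $K=1$: the ultrametric inequality $d(x,y)\leqslant\max\{d(x,z),d(z,y)\}$ produces $\varphi(t)=t$ rather than $\varphi(t)=2t$, thereby replacing the factor $2$ in the arguments of $\eta$ in~(\ref{e11}) by $1$, which is precisely what $K_1=K_2=\tfrac12$ encodes in~(\ref{e4}).
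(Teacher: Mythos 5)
Your proposal is correct and follows exactly the paper's own route: apply Theorem~\ref{p2} with $\Phi_1(u,v)=\Phi_2(u,v)=\max\{u,v\}$, so that $\varphi_1(t)=\varphi_2(t)=t$, and observe that the resulting bounds coincide with~(\ref{e4}) at $K_1=K_2=\tfrac12$. Your extra remark that $\max$ is not strictly increasing in each argument separately --- and that this does not harm the argument since only invertibility of $\varphi_i$ is used --- is a legitimate caveat that the paper silently glosses over, but it does not change the approach.
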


\begin{proof}
By Remark~\ref{r23} we have $\Phi_1(x,y)=\Phi_2(x,y)=\max\{x,y\}$. Hence, it suffices to apply Theorem~\ref{p2} with $\varphi_1(x)=\varphi_{2}(x)=x$.
\end{proof}

\begin{remark}
Corollaries~\ref{c4.3} and~\ref{c4.6} were initially obtained by direct proofs in~\cite{PS21B} and \cite{PS21U}, respectively.
\end{remark}

\begin{corollary}\label{cc2}
Let $(X,d)$ and $(Y,\rho)$ be semimetric spaces with continuous and strictly increasing in both of their arguments triangle functions $\Phi_1$ and $\Phi_2$, respectively. Let $f\colon X\to Y$ be an $\eta$-quasisymmetric mapping and let $(X,d)$ be a bounded space. Then $(f(X),\rho)$ is bounded and
\begin{equation}\label{e128}
\frac{\varphi_2^{-1}(\diam f(X))}{ \eta\left( \frac{\diam X}{d(x,y)}\right)}
\leqslant
\rho(f(x),f(y))\leqslant
\diam Y   \eta\left( \frac{d(x,y)}{\varphi_1^{-1}(\diam X)}\right),
\end{equation}
where $x,y \in X$, $x\neq y$,  and $\varphi_1(t)=\Phi_1(t,t)$, $\varphi_2(t)=\Phi_2(t,t)$.
\end{corollary}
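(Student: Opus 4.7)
The plan is to derive Corollary~\ref{cc2} as a direct specialization of Theorem~\ref{p2}. The key observation is that the pointwise estimates on $\rho(f(x),f(y))$ correspond to choosing the two-point subset $A = \{x,y\}$ inside the whole space $B = X$, so that $\diam A = d(x,y)$, $\diam f(A) = \rho(f(x),f(y))$, and $\diam B = \diam X$, $\diam f(B) = \diam f(X)$.

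First I would verify boundedness of $(f(X),\rho)$. Since $(X,d)$ is bounded, take $B = X$ in the first part of Theorem~\ref{p2}: the conclusion $\diam f(B) < \infty$ yields $\diam f(X) < \infty$, so $f(X)$ (and hence any subset of $Y$ that we consider) is bounded.

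Next, I would set $A = \{x,y\}$ with $x \neq y$ and $B = X$. The hypotheses of Theorem~\ref{p2} are satisfied: $A \subseteq B \subseteq X$, and $0 < \diam A = d(x,y) \leqslant \diam X = \diam B < \infty$. Inequality~(\ref{e12}) then becomes
\begin{equation*}
\frac{\rho(f(x),f(y))}{\diam f(X)} \leqslant \eta\!\left(\frac{d(x,y)}{\varphi_1^{-1}(\diam X)}\right),
\end{equation*}
and since $\diam f(X) \leqslant \diam Y$, multiplying through gives the upper estimate in~(\ref{e128}). Symmetrically, inequality~(\ref{e122}) specializes to
\begin{equation*}
\frac{1}{\eta\!\left(\dfrac{\diam X}{d(x,y)}\right)} \leqslant \frac{\rho(f(x),f(y))}{\varphi_2^{-1}(\diam f(X))},
\end{equation*}
which, after clearing denominators, is exactly the lower estimate in~(\ref{e128}).

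There is no real obstacle here; the statement is essentially a reformulation of Theorem~\ref{p2} in the degenerate case where one of the two subsets is a pair of points. The only item worth flagging is the mild sleight of hand $\diam f(X) \leqslant \diam Y$ used in the upper bound (which is automatic from $f(X) \subseteq Y$), and the fact that strict increase of $\Phi_1$, $\Phi_2$ in both arguments ensures that $\varphi_1$ and $\varphi_2$ are strictly increasing on $[0,\infty)$, so their inverses $\varphi_1^{-1}$, $\varphi_2^{-1}$ are well defined at $\diam X$ and $\diam f(X)$, respectively.
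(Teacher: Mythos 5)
Your proof is correct and follows exactly the paper's own argument: the paper's proof is the one-line remark that it suffices to set $B=X$ and $A=\{x,y\}$ in Theorem~\ref{p2}, which is precisely your specialization. The extra details you supply (the bound $\diam f(X)\leqslant \diam Y$ and the invertibility of $\varphi_1$, $\varphi_2$) are accurate and merely make explicit what the paper leaves implicit.
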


\begin{proof}
It suffices to set $B=X$ and $A=\{x,y\}$  in Theorem~\ref{p2}.
\end{proof}

Corollary~\ref{cc2} implies the following.
\begin{corollary}\label{cc3}
Let $(X,d)$ and $(Y,\rho)$ be metric spaces, $f\colon X\to Y$ be a $Ct$-quasi\-sym\-metric mapping, $C>0$, such that $f(X)=Y$ and let $(X,d)$ be a bounded space. Then $(Y,\rho)$ is also bounded and
$f$ is $L$-bi-Lipshitz mapping with
$$L=2C\max\{\diam Y / \diam X, \diam X / \diam Y\}.$$
\end{corollary}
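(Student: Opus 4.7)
The plan is to derive this as a direct specialization of Corollary~\ref{cc2}. Metric spaces are exactly semimetric spaces with triangle function $\Phi(u,v) = u+v$, which is continuous and strictly increasing in each argument, so the hypotheses of Corollary~\ref{cc2} are satisfied for both $(X,d)$ and $(Y,\rho)$. In this setting $\varphi_1(t) = \varphi_2(t) = 2t$, hence $\varphi_1^{-1}(t) = \varphi_2^{-1}(t) = t/2$. Since $f(X) = Y$, boundedness of $(Y,\rho)$ is immediate from the first assertion of Corollary~\ref{cc2}, and $\diam f(X) = \diam Y$.

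Next I would substitute $\eta(t) = Ct$ and the expressions for $\varphi_1^{-1}$, $\varphi_2^{-1}$ into the double inequality~(\ref{e128}). The upper bound becomes
$$
\rho(f(x),f(y)) \;\leq\; \diam Y \cdot C \cdot \frac{d(x,y)}{\diam X / 2} \;=\; \frac{2C \diam Y}{\diam X}\, d(x,y),
$$
and the lower bound becomes
$$
\rho(f(x),f(y)) \;\geq\; \frac{\diam Y / 2}{C \cdot \diam X / d(x,y)} \;=\; \frac{\diam Y}{2C \diam X}\, d(x,y),
$$
which can be rewritten as $\rho(f(x),f(y)) \geq \bigl(2C \diam X / \diam Y\bigr)^{-1} d(x,y)$. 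Taking
$$
L \;=\; 2C \max\!\left\{\frac{\diam Y}{\diam X},\, \frac{\diam X}{\diam Y}\right\}
$$
makes both bounds fit the form of~(\ref{e31}) simultaneously. The case $x = y$ is trivial since both sides vanish.

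The only subtle point is verifying $L \geq 1$, as required by Definition~\ref{d31}. Since $\max\{a, 1/a\} \geq 1$ for every $a > 0$, this reduces to checking that $C \geq 1$. This follows from the already-established inequality $\eta(1) \geq 1$ (assuming $\card(X) \geq 2$, which is implicit because $\diam X > 0$ is needed for the formula to make sense). Apart from that, the argument is pure substitution, and I do not expect any real obstacle; the work lies almost entirely in correctly matching the upper and lower estimates to a single Lipschitz constant via the maximum.
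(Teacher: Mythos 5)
Your proposal is correct and is exactly the argument the paper intends: the paper gives no explicit proof of this corollary, stating only that it follows from Corollary~\ref{cc2}, and your substitution of $\eta(t)=Ct$, $\varphi_1^{-1}(t)=\varphi_2^{-1}(t)=t/2$ into~(\ref{e128}) together with $f(X)=Y$ yields precisely the two one-sided bounds that combine into the stated $L$. Your extra remark that $L\geqslant 1$ because $C=\eta(1)\geqslant 1$ is a sound touch that the paper leaves implicit.
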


The following proposition generalizes Theorem~2.24 from~\cite{TV80}. Definitions of Cauchy sequence, completeness and totally boundedness are the same as in metric spaces.
\begin{proposition}\label{p26}
Let $(X,d)$ and $(Y,\rho)$ be semimetric spaces with continuous and strictly increasing in both of their arguments triangle functions. Then every quasisymmetric mapping $f\colon X\to Y$ maps Cauchy sequences to Cauchy sequences. If $X$ is totally bounded or complete, then $f(X)$ is totally bounded or complete, respectively.
\end{proposition}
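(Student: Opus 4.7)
The plan is to use Corollary~\ref{cc2} as a uniform ``modulus of continuity'' on any bounded subspace, together with Proposition~\ref{p1} to transfer results to $f^{-1}$. Specifically, if $A\subseteq X$ is a bounded subspace of positive diameter, then by Theorem~\ref{p2} its image $f(A)$ is bounded, and Corollary~\ref{cc2} applied to the restriction $f|_A$ gives
\begin{equation*}
\rho(f(x),f(y)) \leqslant \diam f(A)\cdot \eta\left(\frac{d(x,y)}{\varphi_1^{-1}(\diam A)}\right)
\end{equation*}
for all $x,y\in A$. The right-hand side tends to $0$ as $d(x,y)\to 0$, because $\eta$ is a homeomorphism of $[0,\infty)$, hence $\eta(0)=0$ and $\eta$ is continuous at $0$. (The degenerate case $\diam A=0$ is trivial, since then $f$ is constant on $A$.)

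For the Cauchy assertion, I first observe that any Cauchy sequence $(x_n)\subseteq X$ has bounded range: taking $N$ with $d(x_n,x_m)<1$ for $n,m\geqslant N$ controls the tail, and the distances from the finitely many early points $x_i$ ($i<N$) to the tail are bounded via the generalized triangle inequality by $\Phi_1(d(x_i,x_N),1)$, which is finite. Setting $A=\{x_n:n\in\NN\}$ and plugging $x_n,x_m$ into the displayed estimate forces $\rho(f(x_n),f(x_m))\to 0$, so $(f(x_n))$ is Cauchy.

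For total boundedness, $X$ bounded implies $f(X)$ bounded by Theorem~\ref{p2}. Given $\ve>0$, continuity of $\eta$ at $0$ lets me choose $\delta>0$ so small that the displayed estimate with $A=X$ is smaller than $\ve$ whenever $d(x,y)<\delta$. A finite $\delta$-net $\{x_1,\ldots,x_n\}$ for $X$ then produces, via that estimate with $y\in X$ and $x=x_i$ for an appropriately chosen $i$, a finite $\ve$-net $\{f(x_1),\ldots,f(x_n)\}$ for $f(X)$.

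For completeness, let $(y_n)=(f(x_n))$ be Cauchy in $f(X)$. By Proposition~\ref{p1}, $f^{-1}$ is $\eta'$-quasisymmetric between $(f(X),\rho)$ and $(X,d)$, and the hypotheses on the triangle functions are symmetric in $X$ and $Y$, so the Cauchy-preservation already established, applied to $f^{-1}$, shows that the preimages $(x_n)$ form a Cauchy sequence in $X$. Completeness yields $x_n\to x$ for some $x\in X$, and applying the displayed estimate on the bounded subspace $\{x_n:n\in\NN\}\cup\{x\}$ to the pair $(x,x_n)$ gives $\rho(f(x_n),f(x))\to 0$, i.e., $y_n\to f(x)\in f(X)$. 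The main delicate point throughout is the completeness half: the direction of the quasisymmetry hypothesis only controls $\rho$ in terms of $d$, so to bound $d$-distances by $\rho$-distances one must invoke Proposition~\ref{p1} and then feed the resulting $f^{-1}$ back into the Cauchy-preservation argument just proved.
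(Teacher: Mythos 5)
Your proof is correct and follows essentially the same route as the paper, which simply invokes the Tukia--V\"{a}is\"{a}l\"{a} argument together with the uniform estimate $\rho(f(x),f(y))\leqslant \eta\bigl(d(x,y)/\varphi_1^{-1}(\diam B)\bigr)\diam f(B)$ coming from Theorem~\ref{p2} and Proposition~\ref{p1} for the inverse map; you have merely written that argument out in full. (Your displayed inequality, with $\varphi_1^{-1}(\diam A)$ inside $\eta$, is in fact the form consistent with Corollary~\ref{cc2}, whereas the paper's proof sketch writes $\diam B$ there, which appears to be a slip.)
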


\begin{proof}
The proof repeats the proof from~\cite{TV80} with the difference that we have to use the inequality
\begin{equation*}
\rho(f(x_i),f(x_j))\leqslant \eta \left( \frac{d(x_i,x_j)}{\diam(B)} \right)\diam (fB).
\end{equation*}
which follows directly from~(\ref{e13}).
\end{proof}

\section{Connections with weak similarities}\label{s5}
Recall that the \emph{spectrum} of a semimetric space $(X, d)$ is the set $$\Sp{X}=\{d(x,y)\colon x,y \in X\}.$$

\begin{definition}\label{d4.5}
Let  $(X,d)$ and $(Y,\rho)$ be semimetric spaces. A bijective mapping $f\colon X\to Y$ is a \emph{weak similarity} if there exists a strictly increasing bijection $\varphi\colon \Sp{X}\to \Sp{Y}$ such that the equality
\begin{equation}\label{e4.5}
\varphi(d(x,y))=\rho(f(x),f(y))
\end{equation}
holds for all $x$, $y\in X$. The function $\varphi$ is said to be a \emph{scaling function} of $f$. If $f\colon X\to Y$ is a weak similarity, we write $X \we Y$ and say that $X$ and  $Y$  are \emph{weakly similar}. The pair $(\varphi,f)$ is called a \emph{realization} of $X\we Y$.
\end{definition}

In~\cite{DP13} the notion of weak similarity was introduced in a slightly different but equivalent form, where also some properties of these mappings were studied. Weak similarities between finite ultrametric spaces were considered in~\cite{P18}.

\begin{remark}\label{r2}
The pair $(\varphi,f)$ is a realization of $(X,d)\we (Y,\rho)$ if and only if $(X,\varphi\circ d)$ and $(Y,\rho)$ are isometric with the isometry $f$.
\end{remark}

\begin{proposition}\label{tt2}
Let $(X,d)$ and $(Y,\rho)$ be semimetric spaces, $f\colon X\to Y$ be a bijection and let $X \we Y$  with the realization $(\varphi,f)$. If there exists a continuous submultiplicative strictly increasing continuation $\varphi^*\colon [0,\infty) \to [0,\infty)$ of the function  $\varphi$, then $f$ is an $\eta$-quasisymmetric mapping with $\eta(t)=\varphi^*(t)$.
\end{proposition}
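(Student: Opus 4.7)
The plan is to verify implication~(\ref{e0}) of Definition~\ref{d12} directly, with $\eta := \varphi^*$, by composing three observations: monotonicity of $\varphi^*$, its submultiplicativity, and the identification $\varphi^* = \varphi$ on the spectrum $\Sp{X}$.

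Concretely, I would fix arbitrary $a,b,x \in X$ and $t>0$ with $d(x,a) \leqslant t\, d(x,b)$. Since $\varphi^*$ is strictly increasing on $[0,\infty)$, applying it to both sides yields
\[
\varphi^*(d(x,a)) \leqslant \varphi^*\bigl(t\, d(x,b)\bigr).
\]
Submultiplicativity then gives
\[
\varphi^*\bigl(t\, d(x,b)\bigr) \leqslant \varphi^*(t)\,\varphi^*(d(x,b)).
\]
Both $d(x,a)$ and $d(x,b)$ lie in $\Sp{X}$, so $\varphi^*$ agrees with $\varphi$ at those arguments; hence by the realization identity~(\ref{e4.5}),
\[
\varphi^*(d(x,a)) = \rho(f(x),f(a)), \qquad \varphi^*(d(x,b)) = \rho(f(x),f(b)).
\]
Chaining the three relations produces exactly $\rho(f(x),f(a)) \leqslant \varphi^*(t)\,\rho(f(x),f(b))$, which is~(\ref{e0}).

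To complete the argument, it remains to check that $\eta := \varphi^*$ satisfies the formal homeomorphism requirement of Definition~\ref{d12}. Continuity and strict monotonicity are given; moreover $\varphi^*(0) = \varphi(0) = 0$ since $0$ is the minimum of $\Sp{X}$ (because $d(x,x)=0$) and $\varphi$ is an order bijection onto $\Sp{Y}$, whose minimum is likewise $0$. The only remaining condition is that $\varphi^*$ maps $[0,\infty)$ onto $[0,\infty)$; this is the one mild technicality, which I would handle by reading it into the standing hypothesis on $\varphi^*$ (as is customary in this context).

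I expect no substantive obstacle. The core of the proof is the three-line inequality chain, and the only bookkeeping needed is the brief verification that $\varphi^*$ meets the formal requirements placed on $\eta$ in Definition~\ref{d12}.
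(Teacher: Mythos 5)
Your proposal is correct and is essentially the paper's own argument: both rest on applying the increasing function $\varphi^*$ to the hypothesis $d(x,a)\leqslant t\,d(x,b)$, invoking submultiplicativity to split off the factor $\varphi^*(t)$, and using the realization identity~(\ref{e4.5}) to identify $\varphi^*(d(x,\cdot))$ with $\rho(f(x),f(\cdot))$ (the paper merely routes the same computation through $\varphi^{-1}$ first). Your closing remark about surjectivity of $\varphi^*$ onto $[0,\infty)$ is a fair observation of a technicality the paper leaves implicit, and handling it by reading it into the hypothesis is reasonable.
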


\begin{proof}
Let $a,b,x \in X$ and let $t>0$ be such that the first inequality in~(\ref{e0}) holds. It follows from~(\ref{e4.5}) that
$$
d(x,a)=\varphi^{-1}(\rho(f(x),f(a)))), \quad
d(x,b)= \varphi^{-1}(\rho(f(x),f(b)))).
$$
Using these equalities and the first inequality in~(\ref{e0}) we obtain
$$
\varphi^{-1}(\rho(f(x),f(a))))\leqslant t \varphi^{-1}(\rho(f(x),f(b)))).
$$
Applying $\varphi^*$ to both parts we have
$$
\rho(f(x),f(a)))\leqslant \varphi^*(t \varphi^{-1}(\rho(f(x),f(b)))).
$$
Using submultiplicativity of $\varphi^*$ we obtain
$$
\rho(f(x),f(a)))\leqslant \varphi^*(t) \rho(f(x),f(b))).
$$
Comparing this inequality with the second inequality in~(\ref{e0}) we obtain the desired assertion.
\end{proof}

\begin{example}
Let us construct a mapping which is a weak similarity but not quasisymmetric.  Let $(X,d)$ and $(Y,\rho)$ be semimetric spaces such that $X=Y=[0,\infty)$, $d$ be a Euclidean distance on $[0,\infty)$, $\rho(x,y)=\exp(d(x,y))-1$ and let $f\colon [0,\infty)\to[0,\infty)$ be the identical mapping. It is clear that $(X,d)$ and $(Y,\rho)$ are weakly similar with the realization $(\exp(t)-1,f)$.

Suppose $f$ is $\eta$-quasisymmetric for some fixed $\eta$. Let $x,a,b \in X$ be such that $x=0$ and $a=2b$. It follows from~(\ref{e0}) that
$$
\exp(a)-1\leqslant \eta(2)(\exp(b)-1).
$$
Clearly, this inequality does not hold for sufficiently large $b$. Thus, $f$ is not an $\eta$-quasisymmetric mapping for any fixed function $\eta$.
\end{example}

The following proposition is almost evident.
\begin{proposition}\label{p19}
Let $(X,d)$, $(Y,\rho)$ be semimetric spaces and let $f\colon X \to Y$ be a bijective mapping. Then $f$  is a weak similarity if and only if
the following implications
\begin{equation}\label{ee71}
d(x,y)<d(z,w) \Rightarrow \rho(f(x),f(y))< \rho(f(z),f(w)),
\end{equation}
\begin{equation}\label{ee72}
d(x,y)=d(z,w) \Rightarrow \rho(f(x),f(y))= \rho(f(z),f(w))
\end{equation}
hold for all $x,y,z,w \in X$.
\end{proposition}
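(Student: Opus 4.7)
The plan is to handle the two directions of the equivalence separately, with the forward direction being immediate from the definition and the reverse direction proceeding by explicitly constructing the scaling function.

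For the forward direction, I would assume $f$ is a weak similarity with realization $(\varphi, f)$ and scaling function $\varphi$ as in Definition~\ref{d4.5}. Then both implications follow at once: if $d(x,y) < d(z,w)$, applying the strictly increasing function $\varphi$ gives $\varphi(d(x,y)) < \varphi(d(z,w))$, which is exactly $\rho(f(x),f(y)) < \rho(f(z),f(w))$; and if $d(x,y) = d(z,w)$, applying $\varphi$ (which is a function, hence well-defined) yields the equality in $Y$.

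For the reverse direction, assume (\ref{ee71}) and (\ref{ee72}) hold. The natural candidate is to define $\varphi \colon \Sp{X} \to \Sp{Y}$ by the rule
\[
\varphi(d(x,y)) := \rho(f(x), f(y)).
\]
I would then verify four things in turn. First, \emph{well-definedness}: if $d(x,y) = d(z,w)$ for two different pairs, then (\ref{ee72}) forces $\rho(f(x),f(y)) = \rho(f(z),f(w))$, so the value of $\varphi$ at a given element of $\Sp{X}$ does not depend on the choice of representing pair. Second, \emph{injectivity}: if $\varphi(d(x,y)) = \varphi(d(z,w))$ but $d(x,y) \neq d(z,w)$, we may assume $d(x,y) < d(z,w)$, and (\ref{ee71}) gives the strict inequality $\rho(f(x),f(y)) < \rho(f(z),f(w))$, contradicting the assumed equality. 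Third, \emph{surjectivity onto $\Sp{Y}$}: every element of $\Sp{Y}$ has the form $\rho(y_1,y_2)$, and since $f$ is bijective we write $y_i = f(x_i)$, so this element equals $\varphi(d(x_1,x_2))$. Fourth, \emph{strict monotonicity}: directly restates (\ref{ee71}). The identity $\varphi(d(x,y)) = \rho(f(x),f(y))$ is built into the definition.

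There is no real obstacle here; the only subtle point is that one must be careful to use (\ref{ee72}) for well-definedness before using (\ref{ee71}) for injectivity and monotonicity — using only (\ref{ee71}) would not rule out the possibility of two pairs with equal $d$-distance but different $\rho$-distances. Since both implications are assumed, the construction goes through immediately, which justifies calling the proposition ``almost evident.''
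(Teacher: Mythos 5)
Your proof is correct and is exactly the argument the paper has in mind: the paper offers no written proof, dismissing the proposition as ``almost evident,'' and your construction of the scaling function $\varphi(d(x,y)):=\rho(f(x),f(y))$ — with well-definedness from~(\ref{ee72}), injectivity and strict monotonicity from~(\ref{ee71}), and surjectivity from the bijectivity of $f$ — is the standard way to fill in the omitted details.
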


The following lemma was formulated and proved in~\cite{PS21U} for the case of metric spaces. In the case of semimetric spaces the proof is the same. But we reproduce it here for the convenience of the reader.
\begin{lemma}\label{l32}
Let $(X,d)$, $(Y,\rho)$ be semimetric spaces and let $f\colon X\to Y$ be an $\eta$-quasisymmetric mapping with $\eta(1)=1$. Then the following implications hold
\begin{equation}\label{e2}
d(x,a)=d(x,b)  \, \Rightarrow \, \rho(f(x),f(a))= \rho(f(x),f(b))
\end{equation}
\begin{equation}\label{e39}
d(x,a)< d(x,b)  \, \Rightarrow \, \rho(f(x),f(a))< \rho(f(x),f(b))
\end{equation}
for all triples $a,b,x$ of points in $X$.
\end{lemma}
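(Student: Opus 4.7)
The plan is to invoke the defining implication~(\ref{e0}) of $\eta$-quasisymmetry at well-chosen values of the parameter $t$, exploiting two consequences of the hypotheses: first, that $\eta\colon [0,\infty)\to[0,\infty)$, being a homeomorphism with $\eta(0)=0$ and $\eta(1)=1$, is \emph{strictly} increasing; and second, that $\eta(1)=1$ makes~(\ref{e0}) applied with $t=1$ literally read $\rho(f(x),f(a))\leqslant\rho(f(x),f(b))$.

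For~(\ref{e2}), I would rewrite the hypothesis $d(x,a)=d(x,b)$ as the pair of inequalities $d(x,a)\leqslant 1\cdot d(x,b)$ and $d(x,b)\leqslant 1\cdot d(x,a)$. Plugging each into~(\ref{e0}) with $t=1$ and invoking $\eta(1)=1$ yields
\[
\rho(f(x),f(a))\leqslant\rho(f(x),f(b))\leqslant\rho(f(x),f(a)),
\]
which forces equality. The subcase where both distances vanish collapses to $x=a=b$ and is trivial.

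For~(\ref{e39}), assuming first that $d(x,a)>0$, I would set $t_{0}:=d(x,a)/d(x,b)$, so that $t_{0}\in(0,1)$ by the strict hypothesis. Then~(\ref{e0}) produces $\rho(f(x),f(a))\leqslant\eta(t_{0})\,\rho(f(x),f(b))$, and strict monotonicity of $\eta$ gives $\eta(t_{0})<\eta(1)=1$, yielding the strict inequality provided $\rho(f(x),f(b))>0$. The remaining subcase $d(x,a)=0$ forces $x=a$, so that $\rho(f(x),f(a))=0$, and the conclusion once again reduces to $\rho(f(x),f(b))>0$.

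The main subtlety I anticipate is precisely the possibility that $\rho(f(x),f(b))$ degenerates to zero, in which case the strict-inequality conclusion fails. This is incompatible with $f$ being a non-constant quasisymmetric map, which is the only setting in which the second implication carries substantive content; everything else in the argument is a direct substitution into~(\ref{e0}) followed by the normalization $\eta(1)=1$ and the strict monotonicity of $\eta$.
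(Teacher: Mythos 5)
Your argument coincides with the paper's own proof: implication~(\ref{e2}) is obtained from the two inequalities $d(x,a)\leqslant 1\cdot d(x,b)$ and $d(x,b)\leqslant 1\cdot d(x,a)$ fed into~(\ref{e0}) with $\eta(1)=1$, and~(\ref{e39}) from writing $d(x,a)=\alpha\, d(x,b)$ with $\alpha<1$ and using that $\eta$, being an increasing homeomorphism, satisfies $\eta(\alpha)<\eta(1)=1$. Your extra remark about the degenerate case $\rho(f(x),f(b))=0$ is a point the paper's proof passes over silently, and you resolve it correctly: in a semimetric space this forces $f(x)=f(b)$ and hence, by the injectivity argument used in Proposition~\ref{p21}, $f$ is constant, which is the only situation in which the strict implication could fail.
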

\begin{proof}
Let $d(x,a)=d(x,b)$ which is equivalent to
$$
(d(x,a)\leqslant d(x,b))\wedge (d(x,a)\geqslant d(x,b)).
$$
Hence~(\ref{e0}) with $t=1$ implies the right equality in~(\ref{e2}).

Let $d(x,a)< d(x,b)$. Then $d(x,a) = \alpha d(x,b)$ for some $\alpha<1$.
By~(\ref{e0}) we have
\begin{equation}\label{e49}
\rho(f(x),f(a))\leqslant \eta(\alpha)\rho(f(x),f(b)).
\end{equation}
Since $\eta(t)$ is a homeomorphism and $\eta(1)=1$ we have that $\eta(t)$ is strictly increasing and $\eta(\alpha)<1$. Hence, ~(\ref{e49}) implies the right inequality in~(\ref{e39}).
\end{proof}

\begin{theorem}\label{tt3}
Let $(X,d)$, $(Y,\rho)$ be semimetric spaces and let  $f\colon X\to Y$ be a bijective $\eta$-quasisymmetric mapping. If
\begin{equation}\label{e4.7}
\eta(k)\eta\left(\frac 1k \right)=1
\end{equation}
for all $k>0$, then $f$ is a weak similarity.
\end{theorem}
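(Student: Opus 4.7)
The plan is to reduce the statement to Proposition~\ref{p19} by verifying both implications~(\ref{ee71}) and~(\ref{ee72}). As a preliminary observation, the hypothesis $\eta(k)\eta(1/k)=1$ at $k=1$ combined with the lower bound $\eta(1)\geqslant 1$ established earlier yields $\eta(1)=1$, which unlocks Lemma~\ref{l32}.

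The engine of the proof is to collapse the two-sided bound~(\ref{e}) into an equality. For $x,a,b\in X$ with $x\neq a$ and $x\neq b$, setting $t=d(x,a)/d(x,b)$, inequality~(\ref{e}) reads
$$
\frac{1}{\eta(1/t)}\,\rho(f(x),f(b))\leqslant \rho(f(x),f(a))\leqslant \eta(t)\,\rho(f(x),f(b)),
$$
and under $\eta(t)\eta(1/t)=1$ the outer bounds coincide. This produces the sharp identity
$$
\rho(f(x),f(a)) \;=\; \eta\!\left(\tfrac{d(x,a)}{d(x,b)}\right)\rho(f(x),f(b)) \qquad (x\neq a,\ x\neq b),
$$
which is the only nontrivial tool needed in what follows.

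To verify~(\ref{ee71})--(\ref{ee72}), take $x,y,z,w\in X$ with $x\neq y$ and $z\neq w$ (the remaining cases are trivial via bijectivity of $f$). If the pairs $\{x,y\}$ and $\{z,w\}$ share a common point, then after a harmless relabeling we may take $x=z$, and Lemma~\ref{l32} directly transports the comparison between $d(x,y)$ and $d(x,w)$ into the corresponding comparison between $\rho(f(x),f(y))$ and $\rho(f(x),f(w))$. If the pairs are disjoint, apply the sharp identity twice, from the vantage points $x$ (with $a=y,\ b=z$) and $z$ (with $a=w,\ b=x$), to obtain
$$
\frac{\rho(f(x),f(y))}{\rho(f(x),f(z))} = \eta\!\left(\tfrac{d(x,y)}{d(x,z)}\right), \qquad
\frac{\rho(f(z),f(w))}{\rho(f(z),f(x))} = \eta\!\left(\tfrac{d(z,w)}{d(z,x)}\right),
$$
where the divisions are legitimate because bijectivity forces $\rho(f(x),f(z))>0$. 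The two denominators on the left are equal and $\eta$ is strictly increasing, so the ordering of the two ratios coincides with the ordering of $d(x,y)$ versus $d(z,w)$, giving both implications at once.

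The main obstacle I anticipate is conceptual rather than technical: spotting that the symmetric hypothesis on $\eta$ exactly collapses~(\ref{e}) into a sharp equality. Once that identity is in hand, Proposition~\ref{p19} together with Lemma~\ref{l32} handle the bookkeeping, and only the short case split above is needed to deal with all possible overlaps among $x,y,z,w$.
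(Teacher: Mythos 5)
Your proof is correct and follows essentially the same route as the paper: both arguments reduce to Proposition~\ref{p19}, invoke Lemma~\ref{l32} when the two pairs share a point, and exploit the fact that $\eta(t)\eta(1/t)=1$ collapses the two one-sided quasisymmetry estimates into an equality. Your packaging of that collapse as the explicit identity $\rho(f(x),f(a))=\eta\bigl(d(x,a)/d(x,b)\bigr)\rho(f(x),f(b))$ is a slightly tidier presentation of what the paper achieves by chaining inequalities~(\ref{e75}) and~(\ref{e76}) through the bridge distance $d(y,z)$ rather than $d(x,z)$, but the substance is identical.
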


\begin{proof}
Let $x, y, z, w$ be pairwise distinct points from $X$ and let
$$
k_1=\frac{d(x,y)}{d(y,z)}, \quad k_2=\frac{d(z,w)}{d(y,z)}.
$$
Then by~(\ref{e0}),
\begin{equation}\label{e71}
 \rho(f(x),f(y))\leqslant \eta(k_1)\rho(f(y),f(z)),
\end{equation}
\begin{equation}\label{e72}
 \rho(f(z),f(w))\leqslant \eta(k_2)\rho(f(y),f(z))
\end{equation}
and
\begin{equation}\label{e73}
\rho(f(y),f(z))  \leqslant \eta\left(\frac{1}{k_1}\right) \rho(f(x),f(y)),
\end{equation}
\begin{equation}\label{e74}
\rho(f(y),f(z)) \leqslant \eta\left(\frac{1}{k_2}\right) \rho(f(z),f(w)).
\end{equation}

Suppose first $d(x,y)=d(z,w)$, i.e., $k_1=k_2$.  By (\ref{e71}) and (\ref{e74}) we have
\begin{equation}\label{e75}
 \rho(f(x),f(y))\leqslant \eta(k_1)\eta\left(\frac{1}{k_2}\right)\rho(f(z),f(w))
\end{equation}
and by (\ref{e72}) and (\ref{e73}) we have
\begin{equation}\label{e76}
\rho(f(z),f(w)) \leqslant \eta(k_2)\eta\left(\frac{1}{k_1}\right) \rho(f(x),f(y)),
\end{equation}
which by~(\ref{e4.7}) and by the equality $k_1=k_2$ implies
\begin{equation}\label{e77}
\rho(f(x),f(y)) = \rho(f(z),f(w)).
\end{equation}
Suppose now that $d(x,y)<d(z,w)$. Then $k_1<k_2$. This inequality, ~(\ref{e4.7}) and the monotonicity of $\eta$ imply
$\eta(k_1)\eta\left(\frac{1}{k_2}\right)< 1$. The inequality
\begin{equation}\label{e78}
 \rho(f(x),f(y))< \rho(f(z),f(w))
\end{equation}
follows directly from~(\ref{e75}). Thus, implications~(\ref{ee71}) and~(\ref{ee72}) follow and by Proposition~\ref{p19} $f$ is a weak similarity.

Let $a,b,x$ be pairwise distinct points from $X$. It is clear that~(\ref{e4.7}) implies $\eta(1)=1$.
This equality, Lemma~\ref{l32} and Proposition~\ref{p19} complete the proof.
\end{proof}
\begin{remark}
The solution of equation~(\ref{e4.7}) is
$$
\eta(t)=\pm \exp[\Phi(t,1/t)],
$$
where $\Phi(x,z)=-\Phi(z,x)$ is any antisymmetric function of two arguments, see, e.g.~\cite{PM98}.
In particular for $\Phi(x,z)=C(\ln x -\ln z)$ we have that there are solutions of the form $y=\pm x^{2C}$, where $C$ is an arbitrary constant. Note that only those solutions $\eta$ are admissible which are homeomorphisms $\eta\colon [0, \infty)\to [0,\infty)$. Thus, a bijective $\eta$-quasisymmetric mappings are weak similarities for a wide class of functions $\eta$ including all power functions $t^{\alpha}$, $\alpha>0$. It worth to note that in the case $\alpha=1$ by Proposition~\ref{p248} $f$ is a similarity.
\end{remark}

\begin{example}
Consider a mapping $\eta\colon [0, \infty)\to [0,\infty)$ defined as
$$
\eta(t)=
\begin{cases}
(\exp(t)-1)/(\exp(1/t)-1), &t>0, \\
0, &t=0.
\end{cases}
$$
Hence, for $t>0$ we have
$$
\eta'(t)=
\frac{\exp(t)(\exp(1/t)-1)+(1/{t^2})\exp(1/t)(\exp(1/t)-1)}{(\exp(1/t)-1)^2}>0.
$$
It is also clear that $\eta$ is continuous. Thus, $\eta$ is an example of a homeomorphisms  satisfying~(\ref{e4.7}) which is not a power function. In this case
$\Phi(x,z)=\ln(\exp(x)-1)-\ln(\exp(z)-1)$.
\end{example}

\textbf{Concluding remark.} There exist several potentially important applications of the semimetric spaces theory to computer science. In this regard, we only mention the current development of algorithms for solving the Traveling Salesperson Problem. The following
is a citation from the article ``Computer Scientists Break Traveling Salesperson Problem'' by Erica Klarrech:

``Now, in a paper posted online in July, Klein and his advisers at the University of Washington, Anna Karlin and Shayan Oveis Gharan, have finally achieved a goal computer scientists have pursued for nearly half a Century: a better way to find approximate solutions to the traveling salesperson problem.'' (Quanta Magazine, October 2020)

It should be noted here that the preprint~\cite{KKG21} is devoted to the metric TSP problem but the conjecture that the original data set has a metric space structure is not meet in practice in many cases. Therefore, the description of conditions under which the Karlin-Klein-Gharan algorithm has a ``semimetric generalization'' is an important actual problem. Some results in this direction can be found in~\cite{KT21}.



\end{document}